\newcommand{\rrvert}{\vert}
\newcommand{\llvert}{\vert}
\newtheorem{theorem}{Theorem}[section]
\newtheorem{lemma}[theorem]{Lemma}
\newtheorem{corollary}[theorem]{Corollary}
\theoremstyle{definition}
\newtheorem{remark}[theorem]{Remark}
\newcommand{\R}{\mathbb R}
\newcommand{\N}{\mathbb N}
\newcommand{\E}{\mathbf E}
\newcommand{\Et}{\E_\theta}
\newcommand{\F}{\mathfrak F}
\newcommand{\pr}{\mathbf P}
\newcommand{\prt}{\pr_\theta}
\newcommand{\ind}{\mathds1}
\newcommand*{\abs}[1]{\big\llvert #1\big\rrvert }
\newcommand{\toL}{\xrightarrow{L_1}}
\DeclareMathOperator{\sgn}{sgn}
\begin{document}
\begin{frontmatter}

\title{Asymptotic normality of discretized maximum likelihood estimator
for drift parameter in~homogeneous diffusion model}
\author[a]{\inits{K.}\fnm{Kostiantyn}\snm{Ralchenko}}\email
{k.ralchenko@gmail.com}
\address[a]{Taras Shevchenko National University of Kyiv, \\
Department of Probability Theory, Statistics and Actuarial Mathematics,
\\
Volodymyrska 64/13, 01601 Kyiv, Ukraine}

%\author[]{\inits{}\fnm{}\snm{}\corref{cor1}}\email{}
%\cortext[cor1]{Corresponding author.}
%
%\author[]{\inits{}\fnm{}\snm{}}\email{}

%\fnref{f1}
%\fntext[]{Some remarks}

%\address[]{}
%\address[]{}

\markboth{K. Ralchenko}{Asymptotic normality of discretized maximum
likelihood estimator for drift parameter}

\begin{abstract}
We prove the asymptotic normality of the discretized maximum likelihood
estimator for the drift parameter in the homogeneous ergodic diffusion model.
\end{abstract}

\begin{keyword}
Stochastic differential equation \sep drift parameter \sep discretized
model \sep asymptotic normality
\MSC[2010]62F12 \sep60H10 \sep60J60
\end{keyword}

\received{17 March 2015}% Updated by VTEXPTS2LaTeX.exe, 09.04.2015 10:11
\revised{4 April 2015}% Updated by VTEXPTS2LaTeX.exe, 09.04.2015 10:11
\accepted{5 April 2015}% Updated by VTEXPTS2LaTeX.exe, 09.04.2015 10:11
\publishedonline{13 April 2015}
\end{frontmatter}

\section{Introduction}
The statistical inference for diffusion models has been thoroughly
studied by now; see the books \cite
{Heyde:1997,Kessler_et_al:2012,Kutoyants:2004,Liptser:Shiryaev:1978:v2,PrakasaRao:1987}
and references therein.

In this paper, we consider the homogeneous diffusion process given by
the stochastic differential equation
\begin{equation*}
dX_t=\theta a(X_t)\,dt+b(X_t)
\,dW_t, %\label{eq:SDE}
\end{equation*}
where $W_t$ is a standard Wiener process, and $\theta$ is an unknown
parameter.

The standard maximum likelihood estimator for the parameter $\theta$
constructed by the observations of $X$ on the interval $[0,T]$ has the
form\vadjust{\eject}
\[
\hat\theta_T=\frac{\int_0^T\frac{a(X_t)}{b(X_t)^2}\,dX_t}{\int_0^T\frac
{a(X_t)^2}{b(X_t)^2}\,dt};
\]
see, for instance, \cite[Example 1.37]{Kutoyants:2004} and \cite{Mishura:2014}.
If the equation has a weak solution, the coefficient $a$ is not
identically zero, and the functions
$\frac{1}{b^2}$, $\frac{a^2}{b^2}$, $\frac{a^2}{b^4}$
are locally integrable, then this estimator is strongly consistent
\xch{}{(}\cite[Thm. 3.3]{Mishura:2014}\xch{}{)}.
Moreover, if the model is ergodic, then this estimator is
asymptotically normal \xch{}{(}\cite[Ex.~1.37]{Kutoyants:2004}\xch{}{)}. Note that in
the nonergodic case the maximum likelihood estimator $\hat\theta_T$ may
have different limit distributions; some examples can be found in \cite
[Sect. 3.5]{Kutoyants:2004}.

If the data are the observations of the trajectory $\{X_t,t\ge0\}$ at
discrete time moments $t_1,t_2,\dots$, we obtain the discrete-time
version of the model. Parameter estimation in such models has been
studied since the mid-1980s; see \cite{DC_FZ86,FZ89,PrakasaRao88}.
A review of this problem and many references can be found in \cite
{Iacus:2008} and \cite{HSorensen:2002}. For recent results, see \cite
{Kessler_et_al:2012,Mishura:2014,Sorensen:2009}.

In this paper, we are interested in the scheme of observations that is
called ``rapidly increasing experimental design.''
The process $X$ is observed at time moments\break \mbox{$t_i = i\Delta_n$},
$i=0,\dots,n$, such that
$\Delta_n\to0$ and $n\Delta_n\to\infty$ as $n\to\infty$.
One of possible approaches to parameter estimation is to consider a
discretized version of the continuous-time MLE $\hat\theta_T$.
The most general results in this direction were obtained
by Yoshida \cite{Yoshida92}.
He proved the consistency and asymptotic normality of the discretized
MLE in the model, where the process was multidimensional, the drift
coefficient depended on $\theta$ nonlinearly, and the diffusion
coefficient also contained an unknown parameter.

Assume that we observe the process $X$ at discrete time moments
$t_k^n=k/n$, \mbox{$0\le k\le n^{1+\alpha}$},
where $0<\alpha<\frac{1}2$.
In this scheme, Mishura \cite{Mishura:2014} proposed the following
discretized version of the maximum likelihood estimator:
\[
\hat\theta_n=\frac{\sum_{k=0}^{n^{1+\alpha}}a (X_{\frac{k}{n}} )
 (X_{\frac{k+1}{n}}-X_{\frac{k}{n}} )/b (X_{\frac
{k}{n}} )^2}{n^{-1}\sum_{k=0}^{n^{1+\alpha}}a (X_{\frac
{k}{n}} )^2/b (X_{\frac{k}{n}} )^2}.
\]
She proved its strong consistency in the case where the coefficients
$a$ and $b$ are bounded.
The aim of this paper is to establish the asymptotic normality of this
estimator. Additionally, we assume the ergodicity of the model, but the
boundedness of the coefficients is not required.
In comparison with general results of Yoshida \cite{Yoshida92}, our
assumptions are less restrictive.
We assume the polynomial growth of the function $1/b$ instead of the condition
$\inf_xb(x)^2>0$.
Also, we do not assume the smoothness of the coefficients and the
polynomial growth of their derivatives; any Lipschitz continuous $a(x)$
and $b(x)$ are possible.

The paper is organized as follows.
In Section \ref{sec:result}, we describe the model and formulate the results.
In Section \ref{sec:simul}, some simulation experiments are considered.
The proof of the main theorem is given in
Section~\ref{sec:proofs}.

\section{Model description and main result}\label{sec:result}
Let $(\varOmega,\F)$ be a measurable space.
Assume that $\theta\in\R$ is fixed but unknown.
Consider a probability measure
$\prt$
such that $\F$ is $\prt$-complete.

Let $X$ solve the equation
\begin{equation}
X_t=x_0+\theta\int_0^ta(X_s)\,ds+
\int_0^tb(X_s)\,dW_s,
\label{eq:SDE}
\end{equation}
where $x_0\in\R$, $a,b\colon\R\to\R$ are measurable functions, and
$\{W_t,t\geq0\}$ is a~standard Wiener
process on $(\varOmega,\F,\prt)$.

Denote
$c(x)=\frac{a(x)}{b(x)^2}$,
$d(x)=\frac{a(x)^2}{b(x)^2}$,
$\varphi_\theta(x)=\exp \{-2\theta\int_0^xc(y)\,dy \}$,
and
$\varPhi_\theta(x)=\int_0^x\varphi_\theta(y)\,dy$.

Assume that the following conditions hold.

\begin{enumerate}[label=(A\arabic*)]
\item\label{as:lipschitz}
For some $L>0$ and for any $x,y\in\R$,
\[
\abs{a(x)-a(y)}+\abs{b(x)-b(y)}\le L|x-y|.
\]
\item\label{as:reccurence}
$\varPhi_\theta(+\infty)=-\varPhi_\theta(-\infty)=+\infty$.
\item\label{as:positive_rec} $G_\theta:=\int_{-\infty}^{+\infty}\frac
{dx}{b(x)^2\varphi_\theta(x)}<\infty$.
\end{enumerate}

It is well known that under assumption \ref{as:lipschitz} the
stochastic differential equation \eqref{eq:SDE} has a unique strong solution.
This assumption also yields that the functions $a(x)$ and $b(x)$
satisfy the linear growth condition, that is,
\begin{equation}
\abs{a(x)}+\abs{b(x)}\le M_1\bigl(1+|x|\bigr) \label{eq:lin_growth}
\end{equation}
for some $M>0$ and for all $x\in\R$.

Assume additionally that
\begin{enumerate}[resume*]
\item\label{as:b>0}
\xch{There}{there} exist $K>0$ and $p\ge0$ such that
\[
\abs{b(x)}^{-1}\le K\bigl(1+|x|^p\bigr).
\]
\end{enumerate}
Then, for some $M_2>0$ and for any $x\in\R$,
\begin{equation}
\abs{c(x)}\le M_2 \bigl(1+|x|^{2p+1} \bigr), \qquad
\abs{d(x)}\le M_2 \bigl(1+|x|^{2p+2} \bigr). \label{eq:growth}
\end{equation}

Under assumptions \ref{as:reccurence}--\ref{as:positive_rec}, the diffusion
process $X$ is positive recurrent; see, for example,\ \cite[Prop.
1.15]{Kutoyants:2004}.
In this case, it has ergodic properties with the invariant density
given by
\begin{equation*}
\mu_\theta(x)=\frac{1}{G_\theta b(x)^2\varphi_\theta(x)}, \quad x\in\R. %\label{eq:inv_density}
\end{equation*}
Let $\xi_\theta$ denote a random variable with density $\mu_\theta(x)$.
Then, for any measurable function $h$ such that
$\Et|h(\xi_\theta)|<\infty$,
\begin{equation}
\frac{1}T\int_0^Th(X_t)
\,dt\to \int_{-\infty}^{+\infty}h(x)\mu_\theta(x)\,dx
\equiv\Et h(\xi_\theta) \quad\text{a.s.\ as }T\to\infty, \label{eq:cont-LLN}
\end{equation}
see \cite[Thm. 1.16]{Kutoyants:2004}.
Moreover, according to \cite[Sect. II.37]{BorodinSalminen}, the
convergence \eqref{eq:cont-LLN} holds also in $L_1$, that is,
\begin{equation}
\frac{1}T\Et\int_0^Th(X_t)
\,dt\to \Et h(\xi_\theta). \label{eq:cont-LLN-L1}
\end{equation}
Assume that the invariant distribution satisfies the condition
\begin{enumerate}[resume*]
\item\label{as:4moment}
$\Et|\xi_\theta|^{r}
\equiv\int_{-\infty}^{+\infty}|x|^r\mu_\theta(x)\,dx
<\infty$ for all $r\ge0$.\vadjust{\eject}
\end{enumerate}

Let $0<\alpha<1$.
Suppose that we observe the process $X$ at discrete time moments
$t_k^n=k/n$, $0\le k\le n^{1+\alpha}$.
Consider the estimator
\[
\hat\theta_n=\frac{\sum_{k=1}^{n^{1+\alpha}}c (X_{\frac
{k-1}{n}} )\Delta X_k^n}%
{n^{-1}\sum_{k=1}^{n^{1+\alpha}}d (X_{\frac{k-1}{n}} )},
\]
where
$\Delta X_k^n=X_{\frac{k}{n}}-X_{\frac{k-1}{n}}$.

Assume also that
\begin{enumerate}[resume*]
\item\label{as:FI>0}
$a$ is not identically zero.
\end{enumerate}

Then $\Et d(\xi_\theta)>0$.
Note also that by \eqref{eq:growth} and \ref{as:4moment},
$\Et d(\xi_\theta)<\infty$.
Now we are ready to formulate the main result.
\renewcommand{\theenumi}{\roman{enumi}}%
\begin{theorem}\label{thm}
Assume that conditions \emph{\ref{as:lipschitz}--\ref{as:FI>0}} hold.
Then
\begin{enumerate}%[label=(\roman*)]
\item[\rm(i)]
$\hat\theta_n\xrightarrow\prt\theta$
as $n\to\infty$,
\item[\rm(ii)]
$n^{\alpha/2} (\hat\theta_n-\theta )
\Rightarrow N (0,1/\Et d(\xi_\theta) )$
as $n\to\infty$.
\end{enumerate}
\end{theorem}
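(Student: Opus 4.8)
The plan is to expand the increments $\Delta X_k^n$ via \eqref{eq:SDE}, to split off a stochastic-integral (martingale) term, and to reduce the statement to the convergence of that martingale plus two remainder estimates. Put $N=\lfloor n^{1+\alpha}\rfloor$, $T=N/n$ (so $T\sim n^{\alpha}\to\infty$), $I_k=[\frac{k-1}{n},\frac{k}{n})$, and recall $d=ca$, $c^2b^2=d$, $cb^2=a$. Since $|I_k|=1/n$, substituting $\Delta X_k^n=\theta\int_{I_k}a(X_s)\,ds+\int_{I_k}b(X_s)\,dW_s$ into the numerator of $\hat\theta_n$ gives
\[
\sum_{k=1}^{N}c\bigl(X_{\frac{k-1}{n}}\bigr)\Delta X_k^n-\frac{\theta}{n}\sum_{k=1}^{N}d\bigl(X_{\frac{k-1}{n}}\bigr)=R_n+M_n,
\]
where $R_n=\theta\sum_{k=1}^{N}c(X_{\frac{k-1}{n}})\int_{I_k}(a(X_s)-a(X_{\frac{k-1}{n}}))\,ds$ and $M_n=\sum_{k=1}^{N}c(X_{\frac{k-1}{n}})\int_{I_k}b(X_s)\,dW_s=\int_{0}^{T}H^n_s\,dW_s$, with $H^n_s=c(X_{\frac{k-1}{n}})b(X_s)$ on $I_k$ a predictable process satisfying $\Et\int_0^T(H^n_s)^2\,ds<\infty$, so that $M_n$ is a square-integrable continuous martingale. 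Writing $D_n=\frac1n\sum_{k=1}^{N}d(X_{\frac{k-1}{n}})$ we get $\hat\theta_n-\theta=(R_n+M_n)/D_n$, and the theorem reduces to: (a) $n^{-\alpha}D_n\to\Et d(\xi_\theta)$ in probability, with $\Et d(\xi_\theta)>0$; (b) $n^{-\alpha/2}R_n\to0$ in probability; (c) $n^{-\alpha/2}M_n\Rightarrow N(0,\Et d(\xi_\theta))$. Then Slutsky's theorem gives (ii), and (i) follows from (ii), since $n^{\alpha/2}(\hat\theta_n-\theta)$ is then bounded in probability while $n^{-\alpha/2}\to0$.

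The technical heart, and the step I expect to be the main obstacle, is a group of preliminary estimates. First, uniform moment bounds $\sup_{t\ge0}\Et|X_t|^{r}<\infty$ for every $r\ge0$: these do not follow from \ref{as:4moment} alone, which only controls the invariant law, and should be derived from a Lyapunov-function estimate exploiting the recurrence conditions \ref{as:reccurence}--\ref{as:positive_rec} (together with \ref{as:lipschitz} and \eqref{eq:lin_growth}). From \eqref{eq:SDE}, \eqref{eq:lin_growth} and these bounds one gets $\Et|X_s-X_t|^{2}\le C|s-t|$ whenever $|s-t|\le1$. Next, although $b$ may vanish, \ref{as:lipschitz} and \ref{as:b>0} force $c$, $c^2$ and $d$ to be locally Lipschitz with polynomial modulus, e.g.\ $|c(x)-c(y)|\le|x-y|\,Q(|x|,|y|)$ for a fixed polynomial $Q$ (write $c(x)-c(y)$ over the common denominator $b(x)^2b(y)^2$ and apply \ref{as:b>0}, \eqref{eq:lin_growth}). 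Combining these with the Cauchy--Schwarz inequality, for any fixed polynomial $P$ one obtains
\[
\Et\bigl[\bigl|X_s-X_{\frac{k-1}{n}}\bigr|\,P\bigl(|X_s|,\bigl|X_{\frac{k-1}{n}}\bigr|\bigr)\bigr]\le C\,n^{-1/2}\qquad(s\in I_k),
\]
uniformly in $k\le N$.

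For (b): by \ref{as:lipschitz}, $|a(X_s)-a(X_{\frac{k-1}{n}})|\le L|X_s-X_{\frac{k-1}{n}}|$, and $|c|\le M_2(1+|\cdot|^{2p+1})$ by \eqref{eq:growth}, so the key estimate yields $\Et|R_n|\le C\,N\,n^{-1}\cdot n^{-1/2}=C\,n^{\alpha-1/2}$, whence $n^{-\alpha/2}\Et|R_n|\le C\,n^{-(1-\alpha)/2}\to0$ because $\alpha<1$; thus $n^{-\alpha/2}R_n\to0$ in $L_1$, hence in probability. For (a): the same estimate applied to $d$ gives $\Et\bigl|D_n-\int_0^T d(X_s)\,ds\bigr|\le C\,n^{\alpha-1/2}=o(n^{\alpha})$, while $n^{-\alpha}\int_0^T d(X_s)\,ds=\frac1T\int_0^T d(X_s)\,ds\to\Et d(\xi_\theta)$ a.s.\ by \eqref{eq:cont-LLN} (applicable since $\Et d(\xi_\theta)<\infty$ by \eqref{eq:growth} and \ref{as:4moment}, and $\Et d(\xi_\theta)>0$ by \ref{as:FI>0}); hence $n^{-\alpha}D_n\to\Et d(\xi_\theta)$ in probability.

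For (c): the quadratic variation of $M_n$ is $\langle M_n\rangle_T=\sum_{k=1}^{N}c(X_{\frac{k-1}{n}})^2\int_{I_k}b(X_s)^2\,ds$. Since $c(X_s)^2b(X_s)^2=d(X_s)$, and, by the polynomial-modulus bound applied to $c^2$ together with $b(x)^2\le M_1^2(1+|x|)^2$, one has $\Et\sum_{k=1}^{N}\int_{I_k}\bigl|c(X_{\frac{k-1}{n}})^2-c(X_s)^2\bigr|\,b(X_s)^2\,ds\le C\,n^{\alpha-1/2}=o(n^{\alpha})$, it follows that $n^{-\alpha}\langle M_n\rangle_T=\frac1T\int_0^T d(X_s)\,ds+o_{\pr}(1)\to\Et d(\xi_\theta)$ in probability, by the computation in (a). A central limit theorem for continuous martingales whose quadratic variation converges in probability to a constant --- concretely, the Dambis--Dubins--Schwarz representation $M_n=B^{(n)}\bigl(\langle M_n\rangle_T\bigr)$ with a standard Brownian motion $B^{(n)}$, followed by the scaling $u\mapsto n^{-\alpha/2}B^{(n)}(n^{\alpha}u)$ and the uniform continuity of Brownian paths on compacts --- then gives $n^{-\alpha/2}M_n\Rightarrow N(0,\Et d(\xi_\theta))$. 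Combining (a)--(c) by Slutsky's theorem proves (ii), and (i) is immediate.
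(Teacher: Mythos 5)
Your overall architecture coincides with the paper's: your decomposition $\hat\theta_n-\theta=(R_n+M_n)/D_n$ is exactly \eqref{eq:differ} with $R_n=A_n$ and $M_n=B_n+E_n$, and the reduction to (a) a law of large numbers for $D_n$, (b) negligibility of the drift remainder, and (c) a martingale CLT is the same. The genuine gap is in the ``technical heart'' you yourself flag: you assume the uniform moment bounds $\sup_{t\ge0}\Et|X_t|^{r}<\infty$, concede that they do not follow from \ref{as:4moment}, and defer them to an unspecified ``Lyapunov-function estimate exploiting \ref{as:reccurence}--\ref{as:positive_rec}''. No such estimate is available from the stated hypotheses: \ref{as:reccurence}--\ref{as:positive_rec} are pure recurrence conditions and, together with \ref{as:4moment}, control only the invariant law; they supply no drift condition of the form $LV\le c-\lambda V$ that would give uniform-in-time moments (a condition of that kind, \eqref{eq:A0}, appears only in the Corollary, not in the Theorem). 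Since your key estimate $\Et\bigl[|X_s-X_{\frac{k-1}{n}}|\,P(|X_s|,|X_{\frac{k-1}{n}}|)\bigr]\le Cn^{-1/2}$ uniformly in $k$ --- and hence all of (a), (b), (c) --- rests on these unproved bounds, the proof is incomplete at its foundation.

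The paper's main technical content is precisely the device that circumvents this. It never invokes $\sup_t\Et|X_t|^{r}$; it works only with Ces\`aro-averaged moments, which are bounded via the $L_1$ ergodic theorem \eqref{eq:cont-LLN-L1}. Lemma~\ref{l:bound1} gives the deliberately lossy increment bound $\Et(X_t-X_{\frac{k-1}{n}})^{2m}\le C n^{-m+1+\alpha}$ (the extra $n^{1+\alpha}$ comes from enlarging $\int_{\frac{k-1}{n}}^{t}$ to $\int_0^{n^\alpha}$), Lemma~\ref{l:bound2} gives the sharp rate $n^{-m}$ but weighted by $\Et(1+|X_{\frac{k-1}{n}}|^{2m})$, and Lemma~\ref{l:bound_prod} interpolates between them by H\"older with exponent $2m+2$ so that only time-averages $n^{-\alpha}\int_0^{n^\alpha}\Et|X_t|^{q}\,dt$ are needed; the discrete averages $n^{-1-\alpha}\sum_k\Et(1+|X_{\frac{k-1}{n}}|^{q})$ are in turn controlled by Lemma~\ref{discr-LLN}. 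If you replace your uniform moment bounds by this averaging scheme (accepting the slower but still vanishing rate $n^{-i(m-\alpha)/(2m+2)}$ with $m$ large in place of your $n^{-1/2}$), the rest of your argument goes through; in particular your Dambis--Dubins--Schwarz route to the CLT for $M_n$ is a legitimate alternative to the paper's splitting $B_n=B_n'+B_n''$ and citation of the CLT for stochastic integrals of ergodic diffusions.
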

The proof is given in Section \ref{sec:proofs}.

The following result gives sufficient conditions for consistency and asymptotic
normality in the case where the parameter $\theta$ is positive.

\begin{corollary}
Let $\theta>0$.
Assume that conditions \emph{\ref{as:lipschitz}}, \emph{\ref{as:b>0}}
and \emph{\ref{as:FI>0}}
are fulfilled and, additionally,
\begin{equation}
\limsup_{|x|\to\infty}c(x)\sgn(x)<0. \label{eq:A0}
\end{equation}
Then
\begin{enumerate}%[label=(\roman*)]
\item[\rm(i)]
$\hat\theta_n\xrightarrow\prt\theta$
as $n\to\infty$,
\item[\rm(ii)]
$n^{\alpha/2} (\hat\theta_n-\theta )
\Rightarrow N (0,1/\Et d(\xi_\theta) )$
as $n\to\infty$.
\end{enumerate}
\end{corollary}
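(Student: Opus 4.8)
The plan is to deduce the corollary directly from Theorem~\ref{thm}. Conditions \ref{as:lipschitz}, \ref{as:b>0} and \ref{as:FI>0} are assumed outright, so it suffices to verify that, under $\theta>0$ and \eqref{eq:A0}, the three remaining hypotheses \ref{as:reccurence}, \ref{as:positive_rec} and \ref{as:4moment} are automatically satisfied; once this is done, statements (i) and (ii) follow at once from Theorem~\ref{thm}.

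First I would rewrite \eqref{eq:A0} in a usable form: there exist $\delta>0$ and $R>0$ such that $c(x)\le-\delta$ for $x\ge R$ and $c(x)\ge\delta$ for $x\le-R$. Note that $c=a/b^2$ is continuous, since $a$ and $b$ are continuous by \ref{as:lipschitz} and $b$ never vanishes by \ref{as:b>0} (indeed $|b(x)|\ge(K(1+|x|^p))^{-1}>0$), so $x\mapsto\int_0^x c(y)\,dy$ is well defined and of class $C^1$. Integrating the one-sided bounds, for $x\ge R$ one gets $\int_0^x c(y)\,dy\le C_0-\delta(x-R)$ with $C_0=\int_0^R c(y)\,dy$, together with a symmetric estimate for $x\le-R$; hence, using $\theta>0$,
\[
\varphi_\theta(x)=\exp\Bigl\{-2\theta\int_0^x c(y)\,dy\Bigr\}\ge C_1\,e^{2\theta\delta|x|}\qquad\text{for }|x|\ge R,
\]
with some $C_1>0$. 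In particular $\varphi_\theta(x)\to+\infty$ as $|x|\to\infty$.

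From this the missing conditions are immediate. Since $\varphi_\theta$ is bounded below by a positive constant on $\R$ and tends to $+\infty$, the integrals $\int_0^{\pm\infty}\varphi_\theta(y)\,dy$ diverge, which gives $\varPhi_\theta(+\infty)=-\varPhi_\theta(-\infty)=+\infty$, i.e.\ \ref{as:reccurence}. For \ref{as:positive_rec} and \ref{as:4moment} I would estimate the integrand of $\int_{-\infty}^{+\infty}|x|^r/(b(x)^2\varphi_\theta(x))\,dx$: by \ref{as:b>0} we have $b(x)^{-2}\le K^2(1+|x|^p)^2$, which is polynomially bounded, while $\varphi_\theta(x)^{-1}\le C_1^{-1}e^{-2\theta\delta|x|}$ for $|x|\ge R$; on the compact set $\{|x|\le R\}$ the integrand is continuous, hence bounded. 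Therefore this integral is finite for every $r\ge0$, which is exactly \ref{as:4moment}, and the case $r=0$ is precisely $G_\theta<\infty$, i.e.\ \ref{as:positive_rec}. With \ref{as:lipschitz}--\ref{as:FI>0} all in force, Theorem~\ref{thm} yields both conclusions.

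The argument is essentially bookkeeping, and the only point demanding a little care is the role of the sign of $\theta$: with $\theta>0$ the condition \eqref{eq:A0} forces $\varphi_\theta$ to grow (in fact exponentially) at $\pm\infty$, so that $1/\varphi_\theta$ decays exponentially and dominates the polynomial growth of $1/b^2$ supplied by \ref{as:b>0}; for $\theta<0$ the very same condition would instead make $G_\theta$ infinite and \ref{as:reccurence} fail, which is why positivity of $\theta$ is imposed here.
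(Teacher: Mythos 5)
Your proposal is correct and follows exactly the route of the paper: deduce \ref{as:reccurence}, \ref{as:positive_rec} and \ref{as:4moment} from $\theta>0$, \eqref{eq:A0} and \ref{as:b>0}, then invoke Theorem~\ref{thm}. The only difference is that the paper delegates this verification to a citation of Kutoyants, whereas you carry out the (correct) exponential-decay estimate for $1/\varphi_\theta$ explicitly.
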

\begin{proof}
Note that condition~\eqref{eq:A0}, together with \ref{as:b>0}, implies that
assumptions \ref{as:reccurence}--\ref{as:positive_rec} are satisfied
and, moreover,
all polynomial moments of the invariant density are finite; see
\cite[p.~3]{Kutoyants:2004}.
Hence, the result follows directly from Theorem~\ref{thm}.
\end{proof}

If the coefficients are bounded, then the consistency and asymptotic
normality of $\hat\theta_n$ can be obtained without assumption \ref{as:4moment}.

\begin{corollary}\label{cor:2}
Assume that conditions \emph{\ref{as:lipschitz}--\ref{as:positive_rec}}
are satisfied, the coefficients $a(x)$ and $b(x)$ are bounded, and
$\inf_{x\in\R}|b(x)|>0$.
Then
\begin{enumerate}%[label=(\roman*)]
\item[\rm(i)]\label{cor:2:i}
$\hat\theta_n\xrightarrow\prt\theta$
as $n\to\infty$,
\item[\rm(ii)]
$n^{\alpha/2} (\hat\theta_n-\theta )
\Rightarrow N (0,1/\Et d(\xi_\theta) )$
as $n\to\infty$.
\end{enumerate}
\end{corollary}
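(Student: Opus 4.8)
The plan is to repeat the proof of Theorem~\ref{thm}, checking that under the hypotheses of the corollary the conditions \ref{as:b>0}--\ref{as:FI>0} either hold automatically or are used only through moment estimates that become trivial. Condition \ref{as:b>0} holds with $p=0$ and $K=(\inf_x|b(x)|)^{-1}$. Condition \ref{as:FI>0} is automatic: if $a\equiv0$ then $c\equiv0$, $\varphi_\theta\equiv1$, and $G_\theta=\int_{\R}b(x)^{-2}\,dx=+\infty$ because $b$ is bounded, contradicting \ref{as:positive_rec}; hence $a\not\equiv0$, so $\Et d(\xi_\theta)>0$ as in the paragraph preceding Theorem~\ref{thm}, while $\Et d(\xi_\theta)\le\|d\|_\infty<\infty$ since $d=a^2/b^2$ is bounded, so the limiting variance $1/\Et d(\xi_\theta)$ is well defined and positive. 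The one assumption that need \emph{not} hold here is \ref{as:4moment} --- the invariant density may decay only polynomially --- so one cannot simply invoke Theorem~\ref{thm}. Instead one observes that in the proof of Theorem~\ref{thm} assumption \ref{as:4moment} is used solely to bound moments of $c(X_{\frac{k-1}{n}})$, $d(X_{\frac{k-1}{n}})$ (through \eqref{eq:growth}) and, via \eqref{eq:lin_growth}, of the increments $\Delta X^n_k$, and all of these are available here without it.

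Indeed, boundedness of $a$, $b$, $1/b$ gives $|c|\le\|c\|_\infty$ and $|d|\le\|d\|_\infty$, with $d=c^2b^2$, so $\Et|c(\xi_\theta)|^m+\Et|d(\xi_\theta)|^m<\infty$ for every $m$ and the ergodic limits \eqref{eq:cont-LLN}--\eqref{eq:cont-LLN-L1} apply in particular to $h=d$. Since $a$ and $b$ are bounded, the Burkholder--Davis--Gundy inequality yields $\Et[|\Delta X^n_k|^{2m}\mid\mathcal F_{(k-1)/n}]\le C_m n^{-m}$ and $\Et|X_s-X_{\frac{k-1}{n}}|\le Cn^{-1/2}$ for $(k-1)/n\le s\le k/n$. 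These uniform bounds replace every appeal to polynomial moments of $X$ in the proof of Theorem~\ref{thm}.

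With these replacements the argument of Theorem~\ref{thm} goes through. Using \eqref{eq:SDE} and $d=ca$ one writes
\[
\hat\theta_n-\theta=\frac{M_n+R_n}{D_n},\qquad D_n=n^{-1}\sum_{k=1}^{n^{1+\alpha}}d (X_{\frac{k-1}{n}} ),
\]
with $M_n=\sum_k c (X_{\frac{k-1}{n}} )\int_{(k-1)/n}^{k/n}b(X_s)\,dW_s$ a martingale and $R_n=\theta\sum_k c (X_{\frac{k-1}{n}} )\int_{(k-1)/n}^{k/n} (a(X_s)-a (X_{\frac{k-1}{n}} ) )\,ds$. By \eqref{eq:cont-LLN-L1} with $h=d$, together with the discretization estimate coming from the Lipschitz property of $d$ (which holds since $a$, $b$, $1/b$ are bounded Lipschitz functions) and the increment bounds above, $n^{-\alpha}D_n\to\Et d(\xi_\theta)$ in $L_1$, hence in probability. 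The predictable quadratic characteristic $\langle M_n\rangle=\sum_k c (X_{\frac{k-1}{n}} )^2\int_{(k-1)/n}^{k/n}b(X_s)^2\,ds$ satisfies $n^{-\alpha}\langle M_n\rangle\to\Et d(\xi_\theta)$ as well (since $c^2b^2=d$), and the conditional Lindeberg condition for the increments of $M_n$ holds trivially because $|c|$ and $|b|$ are bounded; the martingale central limit theorem then gives $n^{-\alpha/2}M_n\Rightarrow N (0,\Et d(\xi_\theta) )$. Finally $\Et|R_n|\le|\theta|\,\|c\|_\infty L\sum_k\int_{(k-1)/n}^{k/n}\Et|X_s-X_{\frac{k-1}{n}}|\,ds\le Cn^{1+\alpha}n^{-3/2}=Cn^{\alpha-1/2}\to0$, so $n^{-\alpha/2}R_n\to0$ in $L_1$ since $\alpha<1$. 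Combining via Slutsky's theorem with $n^{-\alpha}D_n\to\Et d(\xi_\theta)>0$ gives $n^{\alpha/2}(\hat\theta_n-\theta)=\frac{n^{-\alpha/2}(M_n+R_n)}{n^{-\alpha}D_n}\Rightarrow N (0,1/\Et d(\xi_\theta) )$, which is~(ii); and~(i) follows because $n^{-\alpha}(M_n+R_n)\to0$ in probability.

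The only genuine task is therefore the bookkeeping of the first two paragraphs: verifying that \ref{as:4moment} enters the proof of Theorem~\ref{thm} exclusively through moments of $c(X_\cdot)$, $d(X_\cdot)$ and $\Delta X^n_k$, and nowhere else. I expect this --- rather than any new estimate --- to be the main point, since once it is settled every bound in that proof is in fact simpler here, the relevant functions being uniformly bounded.
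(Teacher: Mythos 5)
Your proposal is correct and follows essentially the same route as the paper, whose own proof is only a sketch: rerun the argument of Theorem~\ref{thm}, replacing the growth conditions \eqref{eq:lin_growth}, \eqref{eq:growth} and assumption \ref{as:4moment} by boundedness of $a$, $b$, $c$, $d$, and replacing Lemmas~\ref{l:bound1}--\ref{l:bound2} by the uniform increment bound $\Et(X_t-X_{\frac{k-1}{n}})^{2m}\le C(m,\theta)n^{-m}$ — exactly the bookkeeping you identify as the main point. Two minor differences: you merge the paper's $B_n+E_n$ into the single martingale $M_n=\sum_k c(X_{\frac{k-1}{n}})\int b(X_s)\,dW_s$ and invoke a discrete martingale CLT with a Lindeberg condition, whereas the paper isolates $B_n'=\int_0^{n^\alpha}(a/b)(X_t)\,dW_t$ and applies the CLT for stochastic integrals of ergodic diffusions (\cite[Thm.~1.19]{Kutoyants:2004}) plus negligibility of the remainders; both work, and yours is marginally more self-contained. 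You also add a worthwhile observation the paper omits, namely that \ref{as:FI>0} is automatic here because $a\equiv0$ with $b$ bounded would force $G_\theta=\infty$, so the limiting variance is well defined. One cosmetic slip: the intermediate claim $Cn^{\alpha-1/2}\to0$ is false for $\alpha\ge\frac12$; what you actually need and correctly state afterwards is $n^{-\alpha/2}\Et|R_n|\le Cn^{(\alpha-1)/2}\to0$ for all $\alpha<1$.
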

\begin{proof}[Sketch of proof]
This result can be proved similarly to Theorem~\ref{thm} using the
boundedness of $a(x)$, $b(x)$, $c(x)$, $d(x)$ instead of the growth
conditions \eqref{eq:lin_growth}, \eqref{eq:growth}, and
\ref{as:b>0}\vadjust{\eject}
together with the boundedness of moments of the invariant density. In
this case, \eqref{eq:upbound1}~implies the inequality
\[
\Et (X_t-X_{\frac{k-1}{n}} )^{2m}\le C(m,
\theta)n^{-m}
\]
for all $m\in\N$ and $t\in[\frac{k-1}n,\frac{k}n]$, $k=1,2,\dots n^\alpha$.
This estimate is used in the proof instead of Lemmas~\ref
{l:bound1}--\ref{l:bound2}.
\end{proof}

\begin{remark}
For $\alpha\in(0,\frac{1}2)$, Mishura \cite{Mishura:2014} obtained the
a.s.\ convergence in Corollary~\ref{cor:2}(i) \xch{}{\ref{cor:2:i}} without
assumptions \ref{as:reccurence}--\ref{as:positive_rec}.
\end{remark}

\section{Some simulation results}\label{sec:simul}
In this section, we illustrate quality of the estimator by simulation
experiments.
We consider the diffusion process \eqref{eq:SDE} with drift parameter
$\theta=2$ and initial value $x_0=1$ in three following cases:
\begin{enumerate}[label=(\arabic*)]
\item$a(x) =1-x$, $b(x)=2+\sin x$,
\item$a(x) =-\arctan x$, $b(x)=1$,
\item$a(x) =-\frac{x}{1+x^2}$, $b(x)=1$.
\end{enumerate}
Using the Milstein method, we simulate 100 sample paths of each process
and find the estimate $\hat\theta_n$
for different values of $n$ and $\alpha$.
The average values of $\hat\theta_n$ and the corresponding standard
deviations are presented in Tables \ref{tab1}--\ref{tab3}.

\begin{table}[b]
\vspace*{6pt}
\caption{\label{tab1}$a(x) =1-x$, $b(x)=2+\sin x$}
\begin{tabular}{ccllllll}\hline
\multicolumn{2}{c}{} & \multicolumn{6}{l}{$n$}\\\cline{3-8}
\multicolumn{2}{c}{} & 50 & 100 & 500 & 1000 & 2000 & 5000 \\\hline
$\alpha=0.1$ & Mean & 3.05812 & 2.97626 & 2.73973 &
2.58453 & 2.55888 & 2.53879 \\\smallskip
& Std.\,dev. & 2.06388 & 2.00007 & 1.43273 & 1.34689 & 1.26920 &
1.22077 \\
$\alpha=0.5$ & Mean & 2.11065 & 2.15066 & 2.08157 &
2.05626 & 2.03686 & 2.03479 \\\smallskip
& Std.\,dev. & 0.62613 & 0.56038 & 0.31621 & 0.28909 & 0.22875 &
0.18187 \\
$\alpha=0.9$ & Mean & 2.02509 & 2.01702 & 2.02024 &
2.01308 & 2.00626 & 2.00289 \\
& Std.\,dev. & 0.27874 & 0.19589 & 0.09995 & 0.06918 & 0.04850 &
0.03028\\\hline
\end{tabular}\vspace*{10pt}
\end{table}

\begin{table}[b!]
\caption{\label{tab2}$a(x) =-\arctan x$, $b(x)=1$}
\begin{tabular}{ccllllll}\hline
\multicolumn{2}{c}{} & \multicolumn{6}{l}{$n$}\\\cline{3-8}
\multicolumn{2}{c}{} & 50 & 100 & 500 & 1000 & 2000 & 5000 \\\hline
$\alpha=0.1$ & Mean & 2.69321 & 2.66637 & 2.65053 &
2.66356 & 2.59903 & 2.46685 \\\smallskip
& Std.\,dev. & 2.03142 & 2.06075 & 1.82903 & 1.73034 & 1.68212 &
1.50186 \\
$\alpha=0.5$ & Mean & 2.12190 & 2.10459 & 2.01048 &
1.99535 & 2.01712 & 1.99517 \\\smallskip
& Std.\,dev. & 0.85304 & 0.69484 & 0.48803 & 0.37807 & 0.31746 &
0.25846 \\
$\alpha=0.9$ & Mean & 1.95538 & 1.97446 & 1.98035 &
1.99565 & 2.00266 & 2.00290 \\
& Std.\,dev. & 0.35057 & 0.26796 & 0.12235 & 0.09050 & 0.06496 &
0.04533 \\\hline
\end{tabular}
\end{table}

\begin{table}
\caption{\label{tab3}$a(x) =-\frac{x}{1+x^2}$, $b(x)=1$}
\begin{tabular}{ccllllll}\hline
\multicolumn{2}{c}{} & \multicolumn{6}{l}{$n$}\\\cline{3-8}
\multicolumn{2}{c}{} & 50 & 100 & 500 & 1000 & 2000 & 5000 \\\hline
$\alpha=0.1$ & Mean & 1.99507 & 1.99813 & 1.97122 &
1.99255 & 1.98366 & 1.94811 \\\smallskip
& Std.\,dev. & 2.44248 & 2.53060 & 2.17322 & 2.13403 & 2.05527 &
1.80128\\
$\alpha=0.5$ & Mean & 1.87038 & 1.87897 & 1.89022 &
1.92593 & 1.94964 & 1.96624 \\\smallskip
& Std.\,dev. & 1.01932 & 0.89315 & 0.54811 & 0.49005 & 0.41787 &
0.33855\\
$\alpha=0.9$ & Mean & 1.90341 & 1.92162 & 2.00240 &
2.00068 & 2.00491 & 1.99347 \\
& Std.\,dev. & 0.47656 & 0.33693 & 0.18136 & 0.13173 & 0.09595 &
0.07033 \\\hline
\end{tabular}
\end{table}

\section{Proof of Theorem~\ref{thm}}\label{sec:proofs}
In this section, we prove the main theorem and some auxiliary lemmas.
In what follows, $C,C_1,C_2,\dots$ are positive generic constants that
may vary from line to line. If they depend on some arguments, we will
write $C(\theta)$, $C(m,\theta)$, and so on.

By~\eqref{eq:SDE},
\begin{align*}
\Delta X_k^n&=\theta\int_{\frac{k-1}{n}}^{\frac{k}n}a(X_t)
\,dt +\int_{\frac{k-1}{n}}^{\frac{k}n}b(X_t)
\,dW_t
\\
&=\theta a (X_{\frac{k-1}{n}} )\frac{1}n +\theta\int
_{\frac{k-1}{n}}^{\frac{k}n} \bigl(a(X_t)-a
(X_{\frac{k-1}{n}
} ) \bigr)\,dt +b(X_{\frac{k-1}{n}})\Delta W_k^n
\\
&\quad+\int_{\frac{k-1}{n}}^{ \frac{k}n} \bigl(b(X_t)-b
(X_{\frac
{k-1}{n}} ) \bigr)\,dW_t.
\end{align*}
Therefore,
\begin{align*}
\hat\theta_n&=\theta+ \sum_{k=1}^{n^{1+\alpha}}
\Biggl(c (X_{\frac{k-1}{n}} )\theta \int_{\frac{k-1}{n}}^{\frac{k}n}
\bigl(a(X_t)-a (X_{\frac{k-1}{n}} ) \bigr)\,dt +\frac{a (X_{\frac{k-1}{n}} )}{b (X_{\frac{k-1}{n}} )} \Delta
W_k^n
\\
&\quad+ c (X_{\frac{k-1}{n}} )\int_{\frac{k-1}{n}}^{\frac{k}n}
\bigl(b(X_t)-b (X_{\frac{k-1}{n}} ) \bigr)\,dW_t \Biggr)
\Bigm/ \Biggl(\frac{1}{n}\sum_{k=1}^{n^{1+\alpha}}d
(X_{\frac
{k-1}{n}} ) \Biggr).
\end{align*}
Then
\begin{equation}
\hat\theta_n-\theta =\frac{n^{-\alpha}(A_n+B_n+E_n)}{D_n}, \label{eq:differ}
\end{equation}
where
\begin{align*}
D_n&=n^{-1-\alpha}\sum_{k=1}^{n^{1+\alpha}}d
(X_{\frac
{k-1}{n}} ),
\\
A_n&=\sum_{k=1}^{n^{1+\alpha}}c
(X_{\frac{k-1}{n}} )\theta \int_{\frac{k-1}{n}}^{\frac{k}n}
\bigl(a(X_t)-a (X_{\frac{k-1}{n}} ) \bigr)\,dt,
\\
B_n&=\sum_{k=1}^{n^{1+\alpha}}
\frac{a (X_{\frac{k-1}{n}} )}%
{b (X_{\frac{k-1}{n}} )}\Delta W_k^n,
\\
E_n&=\sum_{k=1}^{n^{1+\alpha}}c
(X_{\frac{k-1}{n}} ) \int_{\frac{k-1}{n}}^{\frac{k}n}
\bigl(b(X_t)-b (X_{\frac{k-1}{n}} ) \bigr)\,dW_t.
\end{align*}

\begin{lemma}\label{l:bound1}
Let assumptions \emph{\ref{as:lipschitz}--\ref{as:positive_rec}} and
\emph{\ref{as:4moment}} be fulfilled.
Then for every $m\in\N$, there exists a constant $C(m,\theta)>0$ such that
\begin{align*}
\Et (X_t-X_{\frac{k-1}{n}} )^{2m}&\le C(m,
\theta)n^{-m+1+\alpha}.
\end{align*}
for all $n\in\N$, $1\le k\le n^{1+\alpha}$, and $t\in [\frac
{k-1}{n},\frac{k}{n} ]$.
\end{lemma}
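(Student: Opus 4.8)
The plan is to bound the $2m$-th moment of the increment directly from equation~\eqref{eq:SDE}. For $t\in[\frac{k-1}{n},\frac{k}{n}]$ one has
\[
X_t-X_{\frac{k-1}{n}}=\theta\int_{\frac{k-1}{n}}^{t}a(X_s)\,ds+\int_{\frac{k-1}{n}}^{t}b(X_s)\,dW_s,
\]
an integral over an interval of length at most $1/n$. Using $(u+v)^{2m}\le2^{2m-1}(u^{2m}+v^{2m})$, I would estimate the drift term and the martingale term separately, expecting the martingale term to dominate.

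For the drift term I would apply Jensen's inequality to move the power $2m$ inside the time integral, which costs a factor $n^{-(2m-1)}$, then use the linear growth bound~\eqref{eq:lin_growth} to replace $a(X_s)^{2m}$ by $C(1+|X_s|^{2m})$, and finally enlarge the domain of integration from $[\frac{k-1}{n},t]$ to $[0,n^\alpha]$, which is legitimate since $k\le n^{1+\alpha}$. For the martingale term I would first apply the Burkholder--Davis--Gundy inequality, $\Et\big(\int_{\frac{k-1}{n}}^{t}b(X_s)\,dW_s\big)^{2m}\le C_m\Et\big(\int_{\frac{k-1}{n}}^{t}b(X_s)^{2}\,ds\big)^{m}$, and then repeat the same three steps; now Jensen costs only a factor $n^{-(m-1)}$ because the exponent is $m$ rather than $2m$. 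Both terms are thereby reduced to an estimate for $\Et\int_0^{n^\alpha}|X_s|^{2m}\,ds$.

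The crux, and the only point that is not elementary, is this last quantity. Assumption~\ref{as:lipschitz} only gives linear growth of the coefficients, so there is no uniform-in-time bound on $\Et|X_s|^{2m}$ (a Gronwall argument would produce an exponentially growing bound); instead one has to use the ergodic structure. Applying the $L_1$ law of large numbers~\eqref{eq:cont-LLN-L1} with $h(x)=|x|^{2m}$, which is admissible by~\ref{as:4moment}, gives $\frac1T\Et\int_0^T|X_s|^{2m}\,ds\to\Et|\xi_\theta|^{2m}<\infty$, hence this ratio is bounded over $T\ge1$ and therefore $\Et\int_0^{n^\alpha}|X_s|^{2m}\,ds\le C(m,\theta)(1+n^\alpha)\le C(m,\theta)n^\alpha$. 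Plugging this back in, the drift term is $O(n^{-(2m-1)}n^\alpha)=O(n^{-2m+1+\alpha})$ and the martingale term is $O(n^{-(m-1)}n^\alpha)=O(n^{-m+1+\alpha})$, so the martingale term indeed dominates and gives exactly the claimed bound. The surplus factor $n^{1+\alpha}$ over the ``ideal'' estimate $n^{-m}$ is precisely the price of replacing a pointwise moment bound by the time-averaged ergodic one over the growing interval $[0,n^\alpha]$.
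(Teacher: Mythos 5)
Your proposal is correct and follows essentially the same route as the paper: the same splitting of drift and martingale parts, Burkholder--Davis--Gundy, Jensen's inequality costing $n^{-(2m-1)}$ and $n^{-(m-1)}$ respectively, enlargement of the integration domain to $[0,n^\alpha]$, and the $L_1$ ergodic theorem~\eqref{eq:cont-LLN-L1} combined with the linear growth bound~\eqref{eq:lin_growth} and assumption~\ref{as:4moment} to control $\Et\int_0^{n^\alpha}|X_s|^{2m}\,ds$ by $C n^\alpha$. Your closing remark correctly identifies the source of the extra factor $n^{1+\alpha}$, which is exactly why the paper needs the separate Lemma~\ref{l:bound2} for the sharper $n^{-m}$-type bound.
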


\begin{proof}
By~\eqref{eq:SDE} and the inequality
$(a+b)^{2m}\le2^{2m-1}(a^{2m}+b^{2m})$,
\begin{align*}
&\Et (X_t-X_{\frac{k-1}{n}} )^{2m}\\
&\quad\le2^{2m-1} \Biggl(\theta^{2m}\Et \Biggl(\int
_{\frac{k-1}n}^ta(X_s)\, ds
\Biggr)^{2m} +\Et \Biggl(\int_{\frac{k-1}n}^tb(X_s)
\,dW_s \Biggr)^{2m} \Biggr).
\end{align*}
Using the Burkholder--Davis--Gundy inequality, we obtain
\begin{align*}
&\Et (X_t-X_{\frac{k-1}{n}} )^{2m}\\
&\quad\le2^{2m-1} \Biggl(\theta^{2m}\Et \Biggl(\int
_{\frac{k-1}n}^ta(X_s)\, ds
\Biggr)^{2m} +C(m)\Et \Biggl(\int_{\frac{k-1}n}^tb(X_s)^2
\,ds \Biggr)^m \Biggr).
\end{align*}
By Jensen's inequality,
\begin{align}
 \Et (X_t-X_{\frac{k-1}{n}}
)^{2m}&\le2^{2m-1} \Biggl(\theta^{2m} \bigl(t-\tfrac{k-1}n
\bigr)^{2m-1} \Et\int_{\frac{k-1}n}^ta(X_s)^{2m}
\,ds\notag\\
&\quad+ C(m) \bigl(t-\tfrac{k-1}n \bigr)^{m-1}\Et\int
_{\frac
{k-1}n}^tb(X_s)^{2m}\,ds
\Biggr).\label{eq:upbound1}
\end{align}
Further,
we have
\begin{align*}
\Et (X_t-X_{\frac{k-1}{n}} )^{2m} &\le2^{2m-1}
\Biggl(\theta^{2m}n^{1-2m} \Et\int_0^{n^\alpha}a(X_s)^{2m}
\,ds
\\
&\quad+ C(m)n^{1-m}\Et\int_0^{n^\alpha}b(X_s)^{2m}
\,ds \Biggr).
\end{align*}
Now it remains to note that by~\eqref{eq:lin_growth} and \eqref
{eq:cont-LLN-L1} the integrals
$n^{-\alpha}\int_0^{n^\alpha}a(X_s)^{2m}\,ds$
and
$n^{-\alpha}\int_0^{n^\alpha}b(X_s)^{2m}\,ds$
have bounded expectations.
\end{proof}

\begin{lemma}\label{l:bound2}
Under assumption \emph{\ref{as:lipschitz}},
for every $m\in\N$, there exists a constant $C(m,\theta)>0$ such that
\[
\Et (X_t-X_{\frac{k-1}{n}} )^{2m}\le C(m,
\theta)n^{-m}\Et \bigl(1+|X_{\frac{k-1}{n}}|^{2m} \bigr)
\]
for all $n\in\N$, $1\le k\le n^{1+\alpha}$, and $t\in [\frac
{k-1}{n},\frac{k}{n} ]$.
\end{lemma}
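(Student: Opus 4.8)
The plan is to reuse, essentially verbatim, the chain of inequalities from the proof of Lemma~\ref{l:bound1} up to estimate~\eqref{eq:upbound1}: starting from~\eqref{eq:SDE} one writes $X_t-X_{\frac{k-1}{n}}=\theta\int_{\frac{k-1}{n}}^t a(X_s)\,ds+\int_{\frac{k-1}{n}}^t b(X_s)\,dW_s$, applies the elementary bound $(a+b)^{2m}\le 2^{2m-1}(a^{2m}+b^{2m})$, the Burkholder--Davis--Gundy inequality to the stochastic integral, and Jensen's inequality, arriving exactly at~\eqref{eq:upbound1}. The difference with Lemma~\ref{l:bound1} is that here I would \emph{not} enlarge the time interval to $[0,n^\alpha]$ and invoke the ergodic estimate; instead I would keep the factors $(t-\frac{k-1}{n})^{2m-1}\le n^{-(2m-1)}$ and $(t-\frac{k-1}{n})^{m-1}\le n^{-(m-1)}$, bound $a(X_s)^{2m}$ and $b(X_s)^{2m}$ by $C(1+|X_s|^{2m})$ via the linear growth condition~\eqref{eq:lin_growth}, and estimate $\Et\int_{\frac{k-1}{n}}^t(1+|X_s|^{2m})\,ds\le(t-\frac{k-1}{n})\sup_{s\in[\frac{k-1}{n},t]}\Et(1+|X_s|^{2m})\le n^{-1}\sup_{s}\Et(1+|X_s|^{2m})$.

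The whole statement then reduces to the uniform short-time moment estimate
\[
\sup_{\frac{k-1}{n}\le s\le \frac{k-1}{n}+\frac{1}{n}}\Et\bigl(1+|X_s|^{2m}\bigr)\le C(m,\theta)\,\Et\bigl(1+|X_{\frac{k-1}{n}}|^{2m}\bigr),
\]
with $C(m,\theta)$ independent of $n$ and $k$. This I would prove in the standard way: put $\psi(s)=\Et\sup_{\frac{k-1}{n}\le u\le s}|X_u|^{2m}$ (finite after the usual localization by stopping times), apply the same $(a+b)^{2m}$--BDG--Jensen chain to $X_s-X_{\frac{k-1}{n}}$ but now keeping $|X_{\frac{k-1}{n}}|^{2m}$ on the right, use~\eqref{eq:lin_growth} to dominate $|a(X_v)|^{2m}+|b(X_v)|^{2m}$ by $C(1+|X_v|^{2m})$, and use that the interval has length at most $1$; this yields $\psi(s)\le C\,\Et|X_{\frac{k-1}{n}}|^{2m}+C\int_{\frac{k-1}{n}}^s(1+\psi(v))\,dv$, and Gronwall's lemma gives $1+\psi(s)\le\bigl(1+C\,\Et|X_{\frac{k-1}{n}}|^{2m}\bigr)e^{C(s-\frac{k-1}{n})}\le C(m,\theta)\,\Et\bigl(1+|X_{\frac{k-1}{n}}|^{2m}\bigr)$, since $s-\frac{k-1}{n}\le 1/n\le 1$.

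Plugging this back into~\eqref{eq:upbound1}, the stochastic-integral term contributes $C(m,\theta)\,n^{-(m-1)}\cdot n^{-1}\,\Et(1+|X_{\frac{k-1}{n}}|^{2m})=C(m,\theta)\,n^{-m}\,\Et(1+|X_{\frac{k-1}{n}}|^{2m})$, while the drift term contributes the even smaller $C(m,\theta)\,n^{-(2m-1)}\cdot n^{-1}\,\Et(1+|X_{\frac{k-1}{n}}|^{2m})=C(m,\theta)\,n^{-2m}\,\Et(1+|X_{\frac{k-1}{n}}|^{2m})$; since $m\ge1$ and $n\ge1$ we have $n^{-2m}\le n^{-m}$, and adding the two gives the claim. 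The only point that needs genuine care is the \emph{uniformity} of the Gronwall constant: one must verify that the constant $C$ in $\psi(s)\le C\,\Et|X_{\frac{k-1}{n}}|^{2m}+C\int(1+\psi)$ depends only on $m$, $\theta$ and the Lipschitz constant $L$ (through~\eqref{eq:lin_growth}), not on $k$ or $n$ — which is automatic, because bounding the interval length by $1$ decouples the estimate from $n$ and the coefficients $a,b$ do not depend on $k$. Everything else is a routine repetition of the computation already done for Lemma~\ref{l:bound1}.
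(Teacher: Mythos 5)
Your proof is correct, and it coincides with the paper's argument up to estimate~\eqref{eq:upbound1}; the two proofs diverge only in how that estimate is closed. The paper stays with the increment itself: it splits $a(X_s)^{2m}$ (and likewise $b(X_s)^{2m}$) via assumption~\ref{as:lipschitz} into $\bigl(a(X_s)-a(X_{\frac{k-1}{n}})\bigr)^{2m}+a(X_{\frac{k-1}{n}})^{2m}$, controls the first piece by the Lipschitz constant times $(X_s-X_{\frac{k-1}{n}})^{2m}$ and the second by~\eqref{eq:lin_growth}, and then applies Gronwall directly to $u(t)=\Et(X_t-X_{\frac{k-1}{n}})^{2m}$, which already carries the prefactor $n^{1-m}$. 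You instead use only the linear growth bound $a(X_s)^{2m}+b(X_s)^{2m}\le C(1+|X_s|^{2m})$ and reduce the claim to a separate short-time moment-propagation estimate $\sup_{s}\Et(1+|X_s|^{2m})\le C(m,\theta)\,\Et(1+|X_{\frac{k-1}{n}}|^{2m})$, proved by Gronwall applied to $\Et\sup_u|X_u|^{2m}$. Both routes are valid and yield the same dependence of the constant on $m$, $\theta$ and $L$; yours is one step longer but isolates a reusable conditional moment bound and needs only the linear-growth consequence of~\ref{as:lipschitz} in the closing step, whereas the paper's one-shot Gronwall on the increment is marginally more economical. Your attention to the uniformity of the Gronwall constant in $k$ and $n$ is exactly the right point to flag, and it is indeed automatic once the interval length is bounded by $1$.
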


\begin{proof}
By \eqref{eq:upbound1},
\begin{align*}
&\Et (X_t-X_{\frac{k-1}{n}} )^{2m}
\\
&\quad\le C_1(m,\theta)n^{1-m} \Biggl(\Et\int
_{\frac{k-1}n}^ta(X_s)^{2m}\,ds +\Et
\int_{\frac{k-1}n}^tb(X_s)^{2m}\,ds
\Biggr).
\end{align*}
Using assumption \ref{as:lipschitz} and \eqref{eq:lin_growth}, we get
\begin{align*}
\Et\int_{\frac{k-1}n}^ta(X_s)^{2m}
\,ds&\le2^{2m-1}\Et\int_{\frac{k-1}n}^t \bigl(
\bigl(a(X_s)-a (X_{\frac
{k-1}n} ) \bigr)^{2m} +a
(X_{\frac{k-1}n} )^{2m} \bigr)\,ds\\
&\le2^{2m-1}L\Et\int_{\frac{k-1}n}^t
(X_s-X_{\frac{k-1}n} )^{2m}\,ds\\
&\quad+2^{2m-1}M \bigl(t-\tfrac{k-1}{n} \bigr)\Et \bigl(1+
|X_{\frac{k-1}n}|^{2m} \bigr).
\end{align*}
The same estimate holds for
$\Et\int_{\frac{k-1}n}^tb(X_s)^{2m}\,ds$.
Therefore,
\begin{align*}
\Et (X_t-X_{\frac{k-1}{n}} )^{2m} &\le C_2(m,
\theta)n^{1-m}\int_{\frac{k-1}n}^t\Et
(X_s-X_{\frac
{k-1}n} )^{2m}\,ds
\\
&\quad+C_2(m,\theta)n^{-m}\Et \bigl(1+|X_{\frac{k-1}n}|^{2m}
\bigr),
\end{align*}
and the result follows from the Gronwall lemma.
\end{proof}

\begin{lemma}\label{l:bound_prod}
Assume that conditions ~\emph{\ref{as:lipschitz}--\ref
{as:positive_rec}} and \emph{\ref{as:4moment}}
are fulfilled.
Then for any $m\ge1$, $1\le i\le2m$, and $0\le j\le2m$, there exists
$C(m,\theta)>0$
such that
\[
\sum_{k=1}^{n^{1+\alpha}}\int_{\frac{k-1}{n}}^{\frac{k} n}
\Et \bigl(|X_{\frac{k-1}{n}}-X_t|^i
|X_t|^j \bigr) dt \le C(m,\theta)n^{\alpha-\frac{i(m-\alpha)}{2m+2}}.
\]
\end{lemma}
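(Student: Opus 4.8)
The plan is to separate, via Hölder's inequality, the increment $X_{\frac{k-1}{n}}-X_t$ from the factor $|X_t|^j$ in such a way that the increment gets raised to the power $2m+2$. The reason for choosing $2m+2$ rather than $2m$ is that Lemma~\ref{l:bound1}, applied with $m+1$ in place of $m$, then gives $\Et(X_t-X_{\frac{k-1}{n}})^{2m+2}\le C(m,\theta)\,n^{-m+\alpha}$ uniformly in $k$ and $t$, and raising this to the power $i/(2m+2)$ produces precisely the factor $n^{-i(m-\alpha)/(2m+2)}$ demanded by the statement. The remaining factor, involving $|X_t|^j$, should then be handled — after summing over $k$ and integrating over $t$ — by the $L_1$ ergodic theorem~\eqref{eq:cont-LLN-L1}.

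Concretely, I would first fix $n$, $1\le k\le n^{1+\alpha}$ and $t\in[\frac{k-1}{n},\frac kn]$ and, using $1\le i\le2m$, set $p=\frac{2m+2}{i}>1$ with conjugate exponent $q=\frac{2m+2}{2m+2-i}$, so that Hölder's inequality gives
\[
\Et\bigl(|X_{\frac{k-1}{n}}-X_t|^i|X_t|^j\bigr)
\le\bigl(\Et(X_t-X_{\frac{k-1}{n}})^{2m+2}\bigr)^{\frac{i}{2m+2}}\bigl(\Et|X_t|^{jq}\bigr)^{\frac1q}.
\]
I would then bound the first factor by Lemma~\ref{l:bound1} (with $m+1$ instead of $m$, which is legitimate since \ref{as:lipschitz}--\ref{as:positive_rec} and \ref{as:4moment} hold), obtaining $C(m,\theta)\,n^{-i(m-\alpha)/(2m+2)}$, a bound independent of $k$ and $t$.

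It then remains to show that $\sum_{k=1}^{n^{1+\alpha}}\int_{\frac{k-1}{n}}^{\frac kn}\bigl(\Et|X_t|^{jq}\bigr)^{1/q}\,dt\le C(m,\theta)\,n^{\alpha}$. For this I would use that $1/q\le1$, so $x^{1/q}\le1+x$ for $x\ge0$, whence $\bigl(\Et|X_t|^{jq}\bigr)^{1/q}\le1+\Et|X_t|^{jq}$ and the sum is at most $n^{\alpha}+\int_0^{n^{\alpha}}\Et|X_t|^{jq}\,dt$; by~\ref{as:4moment} we have $\Et|\xi_\theta|^{jq}<\infty$, so~\eqref{eq:cont-LLN-L1} applied with $h(x)=|x|^{jq}$ shows that $n^{-\alpha}\int_0^{n^\alpha}\Et|X_t|^{jq}\,dt$ is a convergent, hence bounded, sequence. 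Multiplying the two estimates then yields the claim.

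I do not expect a genuine obstacle here: the only things to watch are the choice of the exponent $2m+2$ in the Hölder split (so that the exponent lost, $i/(2m+2)$, leaves Lemma~\ref{l:bound1} landing exactly on $n^{\alpha-i(m-\alpha)/(2m+2)}$), and the fact that one must not invoke any pointwise bound on $\sup_{t}\Et|X_t|^{r}$ — only the Cesàro-type control in~\eqref{eq:cont-LLN-L1} is available — which is why the $|X_t|^j$ factor is dealt with through summation and integration rather than termwise.
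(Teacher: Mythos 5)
Your proposal is correct and follows essentially the same route as the paper: the same H\"older split with exponent $2m+2$ on the increment, Lemma~\ref{l:bound1} applied at level $m+1$ to produce the factor $n^{-i(m-\alpha)/(2m+2)}$, and the $L_1$ ergodic bound \eqref{eq:cont-LLN-L1} for the Ces\`aro average of $\Et|X_t|^{jq}$. The only (immaterial) difference is that the paper removes the outer power $1/q$ via Jensen's inequality for the concave map $x\mapsto x^{1/q}$, whereas you use the elementary bound $x^{1/q}\le 1+x$; both are valid.
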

\begin{proof}
Applying the H\"older inequality and Lemma~\ref{l:bound1}, we get
\begin{align*}
\Et \bigl(|X_{\frac{k-1}{n}}-X_t|^i
|X_t|^j \bigr) &\le \bigl(\Et|X_{\frac{k-1}{n}}-X_t|^{2m+2}
\bigr)^{\frac{i}{2m+2}} \bigl(\Et|X_t|^{\frac{j(2m+2)}{2m+2-i}}
\bigr)^{\frac
{2m+2-i}{2m+2}}
\\
&\le C_1(m,\theta) n^{-\frac{(m-\alpha)i}{2m+2}} \bigl(\Et|X_t|^{\frac{j(2m+2)}{2m+2-i}}
\bigr)^{\frac{2m+2-i}{2m+2}}.
\end{align*}
Then
\begin{align*}
&\sum_{k=1}^{n^{1+\alpha}}\int_{\frac{k-1}{n}}^{\frac{k} n}
\Et \bigl(|X_{\frac{k-1}{n}}-X_t|^i
|X_t|^j \bigr) \,dt
\\
&\quad\le C_1(m,\theta)n^{-\frac{(m-\alpha)i}{2m+2}} \int_0^{n^\alpha}
\bigl(\Et|X_t|^{\frac{j(2m+2)}{2m+2-i}} \bigr)^{\frac{2m+2-i}{2m+2}}\,dt.
\end{align*}
By Jensen's inequality we have
\begin{equation*}
\int_0^{n^\alpha} \bigl(\Et|X_t|^{\frac{j(2m+2)}{2m+2-i}}
\bigr)^{\frac{2m+2-i}{2m+2}}\,dt \le n^{\alpha} \Biggl(n^{-\alpha}\int
_0^{n^\alpha} \Et|X_t|^{\frac{j(2m+2)}{2m+2-i}}\,dt
\Biggr)^{\frac{2m+2}{2m+2-i}}.
\end{equation*}
By \eqref{eq:cont-LLN-L1} the expression in brackets is bounded.
This completes the proof.
\end{proof}

\begin{lemma}\label{discr-LLN}
Assume that conditions~\emph{\ref{as:lipschitz}--\ref{as:positive_rec}}
and \emph{\ref{as:4moment}}
are fulfilled.
Then%\vadjust{\eject}
\begin{enumerate}%[label=(\roman*)]
\item[\rm(i)]\label{discr-LLN-i}
for any $m=0,1,2,\ldots,$
\[
n^{-1-\alpha}\sum_{k=1}^{n^{1+\alpha}}X_{\frac{k-1}{n}}^{2m}
\toL\Et\xi_\theta^{2m} \quad\text{as }n\to\infty;
\]
\item[\rm(ii)]\label{discr-LLN-ii}
if, additionally, \emph{\ref{as:b>0}} holds, then
\[
D_n\toL\Et d(\xi_\theta)\quad\text{as }n\to\infty.
\]
\end{enumerate}
\end{lemma}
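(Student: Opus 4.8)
The strategy is to compare each discrete sum with the corresponding continuous-time time-average over $[0,n^{\alpha}]$ (note that $n^{1+\alpha}/n=n^{\alpha}$) and then invoke the $L_1$-ergodic theorem \eqref{eq:cont-LLN-L1}. Concretely, for a function $h$ — with $h(x)=x^{2m}$ in part~(i) and $h=d$ in part~(ii) — one has the identity
\[
n^{-1-\alpha}\sum_{k=1}^{n^{1+\alpha}}h\bigl(X_{\frac{k-1}{n}}\bigr)-n^{-\alpha}\int_0^{n^\alpha}h(X_t)\,dt
=n^{-\alpha}\sum_{k=1}^{n^{1+\alpha}}\int_{\frac{k-1}{n}}^{\frac{k}{n}}\Bigl(h\bigl(X_{\frac{k-1}{n}}\bigr)-h(X_t)\Bigr)\,dt .
\]
By the triangle inequality in $L_1$ it then suffices to prove that the $L_1$-norm of the right-hand side tends to $0$, since $n^{-\alpha}\Et\int_0^{n^\alpha}h(X_t)\,dt\to\Et h(\xi_\theta)$ by \eqref{eq:cont-LLN-L1} — applicable because $\Et|\xi_\theta|^{2m}<\infty$ by \ref{as:4moment}, and $\Et d(\xi_\theta)<\infty$ by \eqref{eq:growth} and \ref{as:4moment}. (For $m=0$ in part~(i) there is nothing to prove, the sum being identically~$1$.)

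The core estimate is a bound for $\Et\bigl|h(X_{\frac{k-1}{n}})-h(X_t)\bigr|$ on each subinterval. For part~(i) one uses the factorization $x^{2m}-y^{2m}=(x-y)\sum_{l=0}^{2m-1}x^l y^{2m-1-l}$ together with the crude inequality $|X_{\frac{k-1}{n}}|^l\le 2^{l}\bigl(|X_t|^l+|X_{\frac{k-1}{n}}-X_t|^l\bigr)$ to dominate $\bigl|X_{\frac{k-1}{n}}^{2m}-X_t^{2m}\bigr|$ by a finite linear combination of products $|X_{\frac{k-1}{n}}-X_t|^i|X_t|^j$ with $1\le i\le 2m$ and $0\le j\le 2m$. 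For part~(ii) the function $d=a^2/b^2$ is not Lipschitz, so one first records a ``polynomially locally Lipschitz'' bound: writing $d(x)-d(y)=\dfrac{a(x)^2-a(y)^2}{b(x)^2}+a(y)^2\Bigl(\dfrac{1}{b(x)^2}-\dfrac{1}{b(y)^2}\Bigr)$ and combining \ref{as:lipschitz}, the linear growth \eqref{eq:lin_growth} of $a$ and $b$, and the polynomial bound \ref{as:b>0} for $1/|b|$, one obtains $|d(x)-d(y)|\le C|x-y|\,Q\bigl(|x|,|y|\bigr)$ for some polynomial $Q$ whose degree depends only on $p$; substituting $x=X_{\frac{k-1}{n}}$, $y=X_t$ and expanding $|X_{\frac{k-1}{n}}|$ around $|X_t|$ as before again reduces $\Et\bigl|d(X_{\frac{k-1}{n}})-d(X_t)\bigr|$ to a finite sum of terms $\Et\bigl(|X_{\frac{k-1}{n}}-X_t|^i|X_t|^j\bigr)$ with $i\ge 1$ and $i,j$ bounded by a constant depending only on $p$.

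In both cases one now fixes $m$ in Lemma~\ref{l:bound_prod} large enough that $2m$ exceeds every exponent that occurs (the same $m$ works in part~(i)), sums over $k$, and integrates; Lemma~\ref{l:bound_prod} bounds $\sum_{k}\int_{\frac{k-1}{n}}^{\frac{k}{n}}\Et\bigl(|X_{\frac{k-1}{n}}-X_t|^i|X_t|^j\bigr)\,dt$ by $C(m,\theta)\,n^{\alpha-i(m-\alpha)/(2m+2)}$. Multiplying by the prefactor $n^{-\alpha}$ leaves $C\,n^{-i(m-\alpha)/(2m+2)}$, which tends to $0$ because $i\ge 1$ and $m\ge 1>\alpha$. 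Hence the $L_1$-distance between the discrete sum and the continuous time-average vanishes, and combining this with \eqref{eq:cont-LLN-L1} yields both (i) and (ii).

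I expect the only real obstacle to be part~(ii): deriving the polynomial local-Lipschitz estimate for $d=a^2/b^2$ without any smoothness of the coefficients, and then tracking the exponents $i,j$ carefully enough to pick an admissible $m$ in Lemma~\ref{l:bound_prod}. Part~(i) and the final passage to the limit are routine once the algebraic reductions above are in place.
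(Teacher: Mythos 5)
Your proposal is correct and follows essentially the same route as the paper: the same discrete-versus-continuous comparison combined with the $L_1$ ergodic theorem \eqref{eq:cont-LLN-L1}, the same reduction via the power inequality \eqref{eq:bound_pow} (resp.\ a polynomial local-Lipschitz bound for $d$, an equivalent rearrangement of the paper's \eqref{eq:bound_d}) to mixed moments of the form $\Et(|X_{\frac{k-1}{n}}-X_t|^i|X_t|^j)$, which are then controlled by Lemma~\ref{l:bound_prod} exactly as you describe.
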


\begin{proof}
(i)
In the case $m=0$, the result is trivial.
Let $m\ge1$.
By~\eqref{eq:cont-LLN-L1} we have
\[
n^{-\alpha}\int_0^{n^\alpha}X_t^{2m}
\,dt\toL\Et\xi_\theta^{2m} \quad\text{as }n\to\infty.
\]
Hence, it suffices to prove that
\begin{align*}
F_n&:=\Et\bigg|n^{-\alpha}\int_0^{n^\alpha}X_t^{2m}
\,dt -n^{-1-\alpha}\sum_{k=1}^{n^{1+\alpha}}X_{\frac{k-1}{n}}^{2m}\bigg|
\\
&=n^{-\alpha}\Et\bigg|\sum_{k=1}^{n^{1+\alpha}}
\int_{\frac
{k-1}{n}}^{\frac{k}{n}} \bigl(X_t^{2m}-X_{\frac{k-1}{n}}^{2m}
\bigr)\,dt\bigg|
\end{align*}
converges to zero as $n\to\infty$.
By the inequality
$|x|\le|x-y|+|y|$,
\begin{equation}
\abs{x^{2m}-y^{2m}} \le|x-y|\sum
_{i=0}^{2m-1}|x|^i|y|^{2m-1-i}
\le\sum_{i=1}^{2m}C_i
|x-y|^i|y|^{2m-i}. \label{eq:bound_pow}
\end{equation}
Therefore,
\[
F_n\le\sum_{i=1}^{2m}C_i
n^{-\alpha}\sum_{k=1}^{n^{1+\alpha}} \int
_{\frac{k-1}{n}}^{\frac{k}{n}}\Et \bigl(|X_{\frac{k-1}{n}}-X_t|^i
|X_t|^{2m-i} \bigr)\,dt,
\]
and, by Lemma~\ref{l:bound_prod},
\[
F_n\le C(m,\theta)\sum_{i=1}^{2m}C_i
n^{-\frac{i(m-\alpha)}{2m+2}} \to0\quad\text{as }n\to\infty.
\]

 (ii)
For arbitrary $x$ and $y$,
\begin{align*}
d(x)-d(y) &=\frac{a(x)^2}{b(x)^2}-\frac{a(y)^2}{b(y)^2}
\\
&=\bigl(a(x)-a(y)\bigr) \biggl(\frac{a(x)}{b(x)^2}+\frac{a(y)}{b(x)b(y)} \biggr)
\\
&\quad -\bigl(b(x)-b(y)\bigr) \biggl(\frac{a(x)a(y)}{b(x)^2b(y)}+\frac
{a(x)a(y)}{b(x)b(y)^2}
\biggr).
\end{align*}
By \ref{as:lipschitz}, \ref{as:b>0}, and
\eqref{eq:lin_growth},\vadjust{\eject}
\begin{align}
\abs{d(x)-d(y)}&\le C|x-y| \bigl(1+|x|^{2p+1} + \bigl(1+
|x|^{p} \bigr) \bigl(1+|y|^{p+1} \bigr)
\notag\\
&\quad+ \bigl(1+|x|^{2p+1} \bigr) \bigl(1+|y|^{p+1} \bigr) +
\bigl(1+|x|^{p+1} \bigr) \bigl(1+|y|^{2p+1} \bigr)
\bigr). \label{eq:bound_d}
\end{align}
The rest of the proof can be done similarly to part (i) using
estimate \eqref{eq:bound_d} instead of~\eqref{eq:bound_pow}.
\end{proof}

\begin{lemma}\label{l:conv_nom1&3}
Under the assumptions of Theorem~\ref{thm},
\begin{enumerate}%[label=(\roman*)]
\item[\rm(i)]\label{conv-1}
$n^{-\alpha/2}|A_n|\xrightarrow{\prt}0$ as $n\to\infty$,
\item[\rm(ii)]\label{conv-2}
$n^{-\alpha/2}|E_n|\xrightarrow{\prt}0$ as $n\to\infty$.
\end{enumerate}
\end{lemma}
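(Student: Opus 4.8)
The plan is to deduce each statement from a stronger mode of convergence: convergence in $L_1$ for $A_n$ and convergence in $L_2$ for $E_n$, both of which imply convergence in probability.

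\emph{Part (i).} First I would estimate $\Et|A_n|$. Bounding $|a(X_t)-a(X_{\frac{k-1}{n}})|\le L|X_t-X_{\frac{k-1}{n}}|$ by \ref{as:lipschitz} and $|c(X_{\frac{k-1}{n}})|\le M_2(1+|X_{\frac{k-1}{n}}|^{2p+1})$ by \eqref{eq:growth}, one obtains
\[
\Et|A_n|\le C\sum_{k=1}^{n^{1+\alpha}}\int_{\frac{k-1}{n}}^{\frac{k}{n}}\Et\bigl(\bigl(1+|X_{\frac{k-1}{n}}|^{2p+1}\bigr)|X_t-X_{\frac{k-1}{n}}|\bigr)\,dt.
\]
Using the elementary inequality $|X_{\frac{k-1}{n}}|^{2p+1}\le C(|X_t|^{2p+1}+|X_t-X_{\frac{k-1}{n}}|^{2p+1})$, the right-hand side becomes a finite combination of sums of exactly the type controlled by Lemma~\ref{l:bound_prod}, with $i\in\{1,2p+2\}$ and $j\in\{0,2p+1\}$. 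Applying Lemma~\ref{l:bound_prod} for an integer $m\ge p+1$ (so that $2m\ge 2p+2$ and the lemma applies), the slowest-decaying contribution, the one with $i=1$, gives $\Et|A_n|\le C(m,\theta)\,n^{\alpha-\frac{m-\alpha}{2m+2}}$, hence $n^{-\alpha/2}\Et|A_n|\le C\,n^{\alpha/2-\frac{m-\alpha}{2m+2}}$. This tends to $0$ provided also $\frac{\alpha}{2}<\frac{m-\alpha}{2m+2}$, i.e.\ $m>\frac{2\alpha}{1-\alpha}$; since $\alpha<1$, an integer $m$ satisfying both constraints exists, and (i) follows.

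\emph{Part (ii).} Here I would exploit the martingale structure of $E_n$. Put $\xi_k:=c(X_{\frac{k-1}{n}})\int_{\frac{k-1}{n}}^{\frac{k}{n}}\bigl(b(X_t)-b(X_{\frac{k-1}{n}})\bigr)\,dW_t$, so that $E_n=\sum_{k=1}^{n^{1+\alpha}}\xi_k$. Since $c(X_{\frac{k-1}{n}})$ is $\F_{\frac{k-1}{n}}$-measurable, where $(\F_t)$ denotes the natural filtration of $W$, and the Itô integral has zero conditional mean, $\Et(\xi_k\mid\F_{\frac{k-1}{n}})=0$; thus $(\xi_k)$ is a martingale-difference sequence and $\Et E_n^2=\sum_{k=1}^{n^{1+\alpha}}\Et\xi_k^2$, each term being finite by \ref{as:4moment} and \eqref{eq:growth}. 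By the conditional Itô isometry, $\Et\xi_k^2=\Et\bigl(c(X_{\frac{k-1}{n}})^2\int_{\frac{k-1}{n}}^{\frac{k}{n}}(b(X_t)-b(X_{\frac{k-1}{n}}))^2\,dt\bigr)$. Estimating this with \ref{as:lipschitz} and \eqref{eq:growth} exactly as in part (i), summing over $k$, and invoking Lemma~\ref{l:bound_prod} with $i=2$ for $m$ large enough (say $m\ge 2p+2$) yields $\Et E_n^2\le C(m,\theta)\,n^{\alpha-\frac{m-\alpha}{m+1}}$. Consequently $\Et\bigl(n^{-\alpha/2}E_n\bigr)^2\le C\,n^{-\frac{m-\alpha}{m+1}}\to0$ since $m\ge1>\alpha$, so $n^{-\alpha/2}E_n\to0$ in $L_2$, hence in probability, which is (ii).

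\emph{Main obstacle.} Once Lemma~\ref{l:bound_prod} is in hand, the estimates are routine; the only genuine point is the bookkeeping of exponents and the choice of $m$. For $E_n^2$ the $L_2$/martingale argument automatically supplies the factor $i=2$, which makes the decay $n^{-(m-\alpha)/(m+1)}$ immediate for every $m\ge1$; for $A_n$, where only $i=1$ is available, one genuinely needs $m>2\alpha/(1-\alpha)$, and this is exactly where the hypothesis $\alpha<1$ is used. Secondary technical care is required for the passage from the power $|X_{\frac{k-1}{n}}|^j$ produced by \eqref{eq:growth} to the power $|X_t|^j$ occurring in Lemma~\ref{l:bound_prod} (handled by $|X_{\frac{k-1}{n}}|^j\le C(|X_t|^j+|X_t-X_{\frac{k-1}{n}}|^j)$), and for the justification of the conditional Itô isometry, that is, the finiteness of $\Et\xi_k^2$, which rests on \ref{as:4moment} together with the polynomial growth \eqref{eq:growth}.
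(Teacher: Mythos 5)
Your proposal is correct, and the overall strategy is the same as the paper's ($L_1$ convergence for $A_n$, $L_2$ convergence for $E_n$ via the It\^o isometry, both implying convergence in probability); the difference lies in which auxiliary estimate carries the load. The paper first applies the Cauchy--Schwarz inequality to separate $\Et\llvert c(X_{\frac{k-1}{n}})(a(X_t)-a(X_{\frac{k-1}{n}}))\rrvert$ into $(\Et c(X_{\frac{k-1}{n}})^2)^{1/2}(\Et(X_t-X_{\frac{k-1}{n}})^2)^{1/2}$ and then invokes Lemma~\ref{l:bound2} (the conditional-on-$X_{\frac{k-1}{n}}$ increment bound of order $n^{-m}$) together with Lemma~\ref{discr-LLN} for the boundedness of the discrete moments; this yields the clean rate $n^{-\alpha/2}\Et|A_n|\le C n^{(\alpha-1)/2}$, valid for all $\alpha<1$ with no free parameter. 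You instead push the product inside a single expectation, split $|X_{\frac{k-1}{n}}|^{j}$ into $|X_t|^{j}$ plus increment powers, and feed everything into Lemma~\ref{l:bound_prod} (which is based on the weaker, unconditional Lemma~\ref{l:bound1} rate $n^{-m+1+\alpha}$); the price is that for $A_n$, where only the power $i=1$ of the increment is available, you must choose $m>2\alpha/(1-\alpha)$ to win, whereas for $E_n$ the factor $i=2$ makes any $m\ge 2p+2$ work. Your bookkeeping of exponents and the existence of a suitable integer $m$ are correct, so the argument goes through; it is essentially the same technique as the paper's proof of Lemma~\ref{discr-LLN}(i) transplanted to $A_n$ and $E_n$, at the cost of a slightly less explicit rate. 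Your martingale-difference reformulation of $\Et E_n^2$ is equivalent to the paper's single-It\^o-integral formulation, and you rightly flag the integrability needed to justify it.
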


\begin{proof}
 (i)
By the Cauchy--Schwarz inequality we have
\begin{align*}
\Et|A_n|&\le|\theta|\sum_{k=1}^{n^{1+\alpha}}
\int_{\frac{k-1}{n}}^{\frac{k}n}\Et\abs{c (X_{\frac{k-1}{n}} )
\bigl(a(X_t)-a (X_{\frac{k-1}{n}} ) \bigr)}\,dt
\\
&\le|\theta|\sum_{k=1}^{n^{1+\alpha}} \int
_{\frac{k-1}{n}}^{\frac{k}n} \bigl(\Et c (X_{\frac{k-1}{n}}
)^2 \bigr)^{\frac{1}2} \bigl(\Et \bigl(a(X_t)-a
(X_{\frac{k-1}{n}} ) \bigr)^2 \bigr)^{\frac{1}2}\,dt.
\end{align*}
Using~\ref{as:lipschitz}, \eqref{eq:growth}, and Lemma~\ref{l:bound2},
we get
\begin{align*}
\Et|A_n|&\le C_1 |\theta|\sum
_{k=1}^{n^{1+\alpha}} \int_{\frac{k-1}{n}}^{\frac{k}n}
\bigl(\Et \bigl(1+|X_{\frac{k-1}{n}}|^{4p+2} \bigr)
\bigr)^{\frac{1}2} \bigl(\Et (X_t-X_{\frac{k-1}{n}} )^2
\bigr)^{\frac{1}2}\,dt
\\
&\le C_2(\theta)n^{-1/2} \sum_{k=1}^{n^{1+\alpha}}
\int_{\frac{k-1}{n}}^{\frac{k}n} \bigl(\Et \bigl(1+
|X_{\frac{k-1}{n}}|^{4p+4} \bigr) \bigr)^{1/2}\, dt
\\
&=C_2(\theta)n^{-3/2} \sum_{k=1}^{n^{1+\alpha}}
\bigl(\Et \bigl(1+|X_{\frac{k-1}{n}}|^{4p+4} \bigr)
\bigr)^{1/2}
\\
&\le C_2(\theta)n^{\alpha-1/2} \Biggl(n^{-1-\alpha}\sum
_{k=1}^{n^{1+\alpha}} \Et \bigl(1+|X_{\frac{k-1}{n}}|^{4p+4}
\bigr) \Biggr)^{1/2}.
\end{align*}
By Lemma~\ref{discr-LLN} the expression
$n^{-1-\alpha}\sum_{k=1}^{n^{1+\alpha}}
\Et (1+|X_{\frac{k-1}{n}}|^{4p+4} )$
is bounded.
Therefore,
\[
n^{-\alpha/2}\Et|A_n|\le C_3(\theta)
n^{\frac{1}2(\alpha-1)} \to0\quad\text{as }n\to\infty.
\]

 (ii)
We have
\[
E_n=\int_0^{n^\alpha}h(t,\omega)
\,dW_t,
\]
where
\[
h(t,\omega)=\sum_{k=1}^{n^{1+\alpha}} c
(X_{\frac{k-1}{n}} ) \bigl(b(X_t)-b (X_{\frac{k-1}{n}} ) \bigr)
\ind_{ [\frac{k-1}{n},\frac{k}{n} )}(t).
\]
Then
\begin{align*}
\Et E_n^2=\Et\int_0^{n^\alpha}h(t,
\omega)^2\,dt =\sum_{k=1}^{n^{1+\alpha}}
\int_{\frac{k-1}{n}}^{\frac{k}n} \Et \bigl(c (X_{\frac{k-1}{n}}
)^2 \bigl(b(X_t)-b (X_{\frac{k-1}{n}} )
\bigr)^2 \bigr)\,dt.
\end{align*}
Similarly to (i), we can estimate
$\Et E_n^2\le C_4(\theta) n^{\alpha-1}$.
Therefore,
$n^{-\alpha/2}|E_n|\xrightarrow{L_2}0$ as $n\to\infty$.
\end{proof}

\begin{lemma}\label{l:conv_denom}
Under the assumptions of Theorem~\ref{thm},
\begin{enumerate}%[label=(\roman*)]
\item[\rm(i)]
$n^{-\alpha}B_n\xrightarrow{\prt}0$ as $n\to\infty$,
\item[\rm(ii)]
$n^{-\alpha/2}B_n\Rightarrow N (0,\Et d(\xi_\theta) )$
as $n\to\infty$.
\end{enumerate}
\end{lemma}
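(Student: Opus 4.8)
The plan is to treat $B_n$ as the terminal value of a continuous square‑integrable martingale, express its quadratic variation through $D_n$, and then invoke a martingale limit theorem. Put
$g(t,\omega)=\sum_{k=1}^{n^{1+\alpha}}\frac{a(X_{\frac{k-1}{n}})}{b(X_{\frac{k-1}{n}})}\ind_{[\frac{k-1}{n},\frac{k}{n})}(t)$
and $M_t:=\int_0^tg(s,\omega)\,dW_s$, so that, exactly as for $E_n$ in the proof of Lemma~\ref{l:conv_nom1&3}(ii), $B_n=M_{n^\alpha}$. Since $g(t,\omega)^2=\sum_{k=1}^{n^{1+\alpha}}d(X_{\frac{k-1}{n}})\ind_{[\frac{k-1}{n},\frac{k}{n})}(t)$ and the intervals $[\frac{k-1}{n},\frac{k}{n})$, $1\le k\le n^{1+\alpha}$, partition $[0,n^\alpha)$,
\[
\langle M\rangle_{n^\alpha}=\int_0^{n^\alpha}g(t,\omega)^2\,dt=\frac1n\sum_{k=1}^{n^{1+\alpha}}d(X_{\frac{k-1}{n}})=n^\alpha D_n,
\]
whence $\Et B_n^2=\Et\langle M\rangle_{n^\alpha}=n^\alpha\Et D_n$. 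Part (i) is then immediate: $\Et(n^{-\alpha}B_n)^2=n^{-\alpha}\Et D_n\to0$, because $\Et D_n\to\Et d(\xi_\theta)<\infty$ by Lemma~\ref{discr-LLN}(ii); hence $n^{-\alpha}B_n\xrightarrow{L_2}0$, and a fortiori $n^{-\alpha}B_n\toP0$.

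For part (ii) I would pass to a time‑changed Wiener process. By the Dambis--Dubins--Schwarz theorem there is a standard Wiener process $\beta$, on a possibly enlarged probability space, with $B_n=M_{n^\alpha}=\beta_{\langle M\rangle_{n^\alpha}}=\beta_{n^\alpha D_n}$. The rescaled process $\beta^{(n)}_u:=n^{-\alpha/2}\beta_{n^\alpha u}$ is again a standard Wiener process for every $n$, so $n^{-\alpha/2}B_n=\beta^{(n)}_{D_n}$. Now $D_n\toP\sigma^2$, where $\sigma^2:=\Et d(\xi_\theta)>0$ by \ref{as:FI>0}, by Lemma~\ref{discr-LLN}(ii); since the law of $\beta^{(n)}$ does not depend on $n$, the oscillation of $\beta^{(n)}$ over a small neighbourhood of $\sigma^2$ is small uniformly in $n$, which yields $\beta^{(n)}_{D_n}-\beta^{(n)}_{\sigma^2}\toP0$. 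As $\beta^{(n)}_{\sigma^2}\sim N(0,\sigma^2)$ exactly for every $n$, Slutsky's lemma gives $n^{-\alpha/2}B_n\Rightarrow N(0,\sigma^2)$, which is (ii).

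The same conclusion can be reached by applying the central limit theorem for continuous martingales to $M^n_t:=n^{-\alpha/2}\int_0^{tn^\alpha}g(s,\omega)\,dW_s$, $t\in[0,1]$, after checking $\langle M^n\rangle_t=n^{-\alpha}\int_0^{tn^\alpha}g^2\,ds\toP t\,\Et d(\xi_\theta)$ for each $t$ (this follows from \eqref{eq:cont-LLN-L1} and the discretization estimate of Lemma~\ref{l:bound_prod}, arguing as in the proof of Lemma~\ref{discr-LLN}), so that $M^n\Rightarrow\sqrt{\Et d(\xi_\theta)}\,W$ and $n^{-\alpha/2}B_n=M^n_1\Rightarrow N(0,\Et d(\xi_\theta))$; or, staying in discrete time, by the martingale difference array CLT for $n^{-\alpha/2}\frac{a(X_{\frac{k-1}{n}})}{b(X_{\frac{k-1}{n}})}\Delta W_k^n$, whose conditional variances sum to $D_n\toP\sigma^2$ and whose conditional fourth moments sum to $3n^{-2-2\alpha}\sum_{k=1}^{n^{1+\alpha}}d(X_{\frac{k-1}{n}})^2$, which tends to $0$ in $L_1$ (hence in probability) because $\Et d(\xi_\theta)^2<\infty$ by \eqref{eq:growth} and \ref{as:4moment} and $n^{-1-\alpha}\sum_{k=1}^{n^{1+\alpha}}d(X_{\frac{k-1}{n}})^2$ has bounded expectation by the argument of Lemma~\ref{discr-LLN}. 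In every version the only substantive ingredient is the identification of the limiting variance, i.e. the convergence $D_n\toL\Et d(\xi_\theta)$ already furnished by Lemma~\ref{discr-LLN}(ii); everything else is bookkeeping with the quadratic variation together with a standard limit theorem, so I do not anticipate a genuine obstacle here.
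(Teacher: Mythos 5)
Your proof is correct, but it takes a genuinely different route from the paper's. The paper splits $B_n=B_n'+B_n''$ with $B_n'=\int_0^{n^\alpha}\frac{a(X_t)}{b(X_t)}\,dW_t$ the continuous-integrand integral and $B_n''$ the discretization error; it gets the limit law of $n^{-\alpha/2}B_n'$ from Kutoyants's CLT for stochastic integrals of ergodic diffusions (\cite[Thm.~1.19]{Kutoyants:2004}) driven by the convergence $n^{-\alpha}\Et\int_0^{n^\alpha}d(X_t)\,dt\to\Et d(\xi_\theta)$, and then has to do extra work to show $n^{-\alpha}\Et(B_n'')^2\to0$ via Lipschitz and growth estimates. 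You instead keep the piecewise-constant integrand intact, observe the exact identity $\langle M\rangle_{n^\alpha}=n^\alpha D_n$, and feed $D_n\toL\Et d(\xi_\theta)$ (Lemma~\ref{discr-LLN}(ii)) into a Dambis--Dubins--Schwarz time change (or, equivalently, a martingale CLT). This buys you two things: part~(i) becomes a one-line computation, $\Et(n^{-\alpha}B_n)^2=n^{-\alpha}\Et D_n\to0$, with no decomposition at all; and part~(ii) avoids the $B_n''$ estimate entirely, since the quadratic variation of the discrete-integrand martingale is already the quantity Lemma~\ref{discr-LLN}(ii) controls. The price is that you must supply the time-change/Slutsky argument yourself (your uniform-in-$n$ modulus-of-continuity step for $\beta^{(n)}$ near $\sigma^2=\Et d(\xi_\theta)>0$ is the right way to do it, and $\sigma^2>0$ is guaranteed by \ref{as:FI>0}), whereas the paper delegates the normality to a quoted theorem. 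Both arguments rest on the same substantive input --- the law of large numbers for $D_n$ --- so either is acceptable; yours is arguably the more economical.
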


\begin{proof}
(i)
Let us prove the convergence in $L_2$.
We have
\[
B_n=B_n'+ B_n'',
\]
where
\[
B_n'=\int_0^{n^\alpha}
\frac{a(X_t)}{b(X_t)}\,dW_t, \qquad B_n''=
\sum_{k=1}^{n^{1+\alpha}}\int_{\frac{k-1}{n}}^{\frac{k}{n}}
\biggl(\frac{a (X_{\frac{k-1}{n}} )}{b (X_{\frac
{k-1}{n}} )} -\frac{a(X_t)}{b(X_t)} \biggr)\,dW_t.
\]
Then by \eqref{eq:cont-LLN-L1} we have
\begin{equation}
\Et \bigl(n^{-\alpha/2}B_n' \bigr)^2
=n^{-\alpha}\Et\int_0^{n^\alpha}\frac{a(X_t)^2}{b(X_t)^2}
\,dt =n^{-\alpha}\Et\int_0^{n^\alpha}d(X_t)
\,dt \to\Et d(\xi_\theta) \label{eq:B'}
\end{equation}
as $n\to\infty$.
Hence,
$n^{-\alpha}B_n'\xrightarrow{L_2}0$
as $n\to\infty$.

Arguing as in the proof of Lemma
\ref{l:conv_nom1&3} (ii), we obtain
\[
\Et\bigl(B_n''\bigr)^2=\sum
_{k=1}^{n^{1+\alpha}}\int_{\frac{k-1}{n}}^{\frac{k}{n}}
\biggl(\frac{a (X_{\frac{k-1}{n}} )}{b (X_{\frac
{k-1}{n}} )} -\frac{a(X_t)}{b(X_t)} \biggr)^2\,dt.
\]
Further,
\[
\frac{a(x)}{b(x)}-\frac{a(y)}{b(y)} =\frac{a(x)}{b(x)b(y)}\bigl(b(y)-b(x)\bigr)+
\frac{1}{b(y)}\bigl(a(x)-a(y)\bigr).
\]
Therefore, by~\eqref{eq:lin_growth} and assumption~\ref{as:b>0},
\begin{align*}
\frac{a(x)}{b(x)}-\frac{a(y)}{b(y)} &\le\frac{2a(x)^2}{b(x)^2b(y)^2}\bigl(b(y)-b(x)
\bigr)^2+\frac
{2}{b(y)^2}\bigl(a(x)-a(y)\bigr)^2
\\
&\le C(x-y)^2 \bigl(1+|x|^{2p+2}|y|^{2p}+
|y|^{2p} \bigr).
\end{align*}
Similarly to the proof of Lemma \ref{discr-LLN}, we get the convergence
\begin{equation}
n^{-\alpha}\Et\bigl(B_n''
\bigr)^2\to0 \quad\text{as }n\to\infty. \label{eq:B''}
\end{equation}

(ii) According to \cite[Theorem 1.19]{Kutoyants:2004},
it follows from~\eqref{eq:B'} that
$n^{-\alpha/2}B_n'\Rightarrow N (0,\break\Et d(\xi_\theta) )$
as $n\to\infty$.
Taking into account the convergence~\eqref{eq:B''}, we obtain the result.
\end{proof}

Now the statement of Theorem~\ref{thm} follows from~\eqref{eq:differ}
and Lemmas \ref{discr-LLN}--\ref{l:conv_denom}.

\section*{Acknowledgments}
The author is grateful to the anonymous referee for his careful reading
of this paper and suggesting a number of improvements.


\begin{thebibliography}{00}

%b1 ###
\bibitem{BorodinSalminen}
%
\begin{bbook}
\bauthor{\bsnm{{Borodin}}, \binits{A.N.}},
\bauthor{\bsnm{{Salminen}}, \binits{P.}}:
\bbtitle{Handbook of Brownian Motion: Facts and Formulae},
\bedition{2nd} edn.
\bpublisher{Birkh\"auser, Basel}
(\byear{2002})
\bid{doi={10.1007/978-3-0348-8163-0}, mr={1912205}}
\end{bbook}
%
%
\OrigBibText
%
\begin{bbook}
\bauthor{\bsnm{{Borodin}}, \binits{A.N.}},
\bauthor{\bsnm{{Salminen}}, \binits{P.}}:
\bbtitle{Handbook of Brownian Motion: Facts and Formulae},
\bedition{2nd} edn.
\bpublisher{Basel: Birkh\"auser}
(\byear{2002})
\end{bbook}
%
\endOrigBibText
\bptok{structpyb}%
\endbibitem

%b2 ###
\bibitem{DC_FZ86}
%
\begin{barticle}
\bauthor{\bsnm{{Dacunha-Castelle}}, \binits{D.}},
\bauthor{\bsnm{{Florens-Zmirou}}, \binits{D.}}:
\batitle{{Estimation of the coefficients of a diffusion from discrete
observations.}}
\bjtitle{{Stochastics}}
\bvolume{19},
\bfpage{263}--\blpage{284}
(\byear{1986})
\bid{doi={10.1080/\\17442508608833428}, mr={0872464}}
\end{barticle}
%
%
\OrigBibText
%
\begin{barticle}
\bauthor{\bsnm{{Dacunha-Castelle}}, \binits{D.}},
\bauthor{\bsnm{{Florens-Zmirou}}, \binits{D.}}:
\batitle{{Estimation of the coefficients of a diffusion from discrete
observations.}}
\bjtitle{{Stochastics}}
\bvolume{19},
\bfpage{263}--\blpage{284}
(\byear{1986})
\end{barticle}
%
\endOrigBibText
\bptok{structpyb}%
\endbibitem

%b3 ###
\bibitem{FZ89}
%
\begin{barticle}
\bauthor{\bsnm{{Florens-Zmirou}}, \binits{D.}}:
\batitle{{Approximate discrete-time schemes for statistics of diffusion
processes.}}
\bjtitle{{Statistics}}
\bvolume{20}(\bissue{4}),
\bfpage{547}--\blpage{557}
(\byear{1989})
\bid{doi={10.1080/02331888908802205}, mr={1047222}}
\end{barticle}
%
%
\OrigBibText
%
\begin{barticle}
\bauthor{\bsnm{{Florens-Zmirou}}, \binits{D.}}:
\batitle{{Approximate discrete-time schemes for statistics of diffusion
processes.}}
\bjtitle{{Statistics}}
\bvolume{20}(\bissue{4}),
\bfpage{547}--\blpage{557}
(\byear{1989})
\end{barticle}
%
\endOrigBibText
\bptok{structpyb}%
\endbibitem

%b4 ###
\bibitem{Heyde:1997}
%
\begin{bbook}
\bauthor{\bsnm{{Heyde}}, \binits{C.C.}}:
\bbtitle{Quasi-Likelihood and Its Application. A General Approach to Optimal
Parameter Estimation}.
\bpublisher{Springer, New York, NY}
(\byear{1997})
\bid{doi={10.1007/b98823}, mr={1461808}}
\end{bbook}
%
%
\OrigBibText
%
\begin{bbook}
\bauthor{\bsnm{{Heyde}}, \binits{C.C.}}:
\bbtitle{Quasi-Likelihood and Its Application. A General Approach to Optimal
Parameter Estimation}.
\bpublisher{New York, NY: Springer}
(\byear{1997})
\end{bbook}
%
\endOrigBibText
\bptok{structpyb}%
\endbibitem

%b5 ###
\bibitem{Iacus:2008}
%
\begin{bbook}
\bauthor{\bsnm{{Iacus}}, \binits{S.M.}}:
\bbtitle{Simulation and Inference for Stochastic Differential
Equations. With R
Examples}.
\bpublisher{Springer, New York, NY}
(\byear{2008})
\bid{doi={10.1007/978-0-387-75839-8}, mr={2410254}}
\end{bbook}
%
%
\OrigBibText
%
\begin{bbook}
\bauthor{\bsnm{{Iacus}}, \binits{S.M.}}:
\bbtitle{Simulation and Inference for Stochastic Differential
Equations. With R
Examples}.
\bpublisher{New York, NY: Springer}
(\byear{2008})
\end{bbook}
%
\endOrigBibText
\bptok{structpyb}%
\endbibitem

%b6 ###
\bibitem{Kessler_et_al:2012}
%
\begin{bbook}
\beditor{\bsnm{{Kessler}}, \binits{M.}},
\beditor{\bsnm{{Lindner}}, \binits{A.}},
\beditor{\bsnm{{S{\o}rensen}}, \binits{M.}} (eds.):
\bbtitle{Statistical Methods for Stochastic Differential Equations. Selected
Papers Based on the Presentations at the 7th S\'eminaire Europ\'een de
Statistiques on ``Statistics for Stochastic Differential Equations Models,''
La Manga del Mar Menor, Cartagena, Spain, 7--12 May 2007}.
\bpublisher{CRC Press, Boca Raton, FL}
(\byear{2012})
\bid{mr={2975799}}
\end{bbook}
%
%
\OrigBibText
%
\begin{bbook}
\beditor{\bsnm{{Kessler}}, \binits{M.}},
\beditor{\bsnm{{Lindner}}, \binits{A.}},
\beditor{\bsnm{{S{\o}rensen}}, \binits{M.}} (eds.):
\bbtitle{Statistical Methods for Stochastic Differential Equations. Selected
Papers Based on the Presentations at the 7th S\'eminaire Europ\'een de
Statistiques on ``Statistics for Stochastic Differential Equations Models,''
La Manga del Mar Menor, Cartagena, Spain, 7--12 May 2007}.
\bpublisher{Boca Raton, FL: CRC Press}
(\byear{2012})
\end{bbook}
%
\endOrigBibText
\bptok{structpyb}%
\endbibitem

%b7 ###
\bibitem{Kutoyants:2004}
%
\begin{bbook}
\bauthor{\bsnm{{Kutoyants}}, \binits{Y.A.}}:
\bbtitle{Statistical Inference for Ergodic Diffusion Processes}.
\bpublisher{Springer, London}
(\byear{2004})
\bid{doi={10.1007/978-1-4471-3866-2}, mr={2144185}}
\end{bbook}
%
%
\OrigBibText
%
\begin{bbook}
\bauthor{\bsnm{{Kutoyants}}, \binits{Y.A.}}:
\bbtitle{Statistical Inference for Ergodic Diffusion Processes}.
\bpublisher{London: Springer}
(\byear{2004})
\end{bbook}
%
\endOrigBibText
\bptok{structpyb}%
\endbibitem

%b8 ###
\bibitem{Liptser:Shiryaev:1978:v2}
%
\begin{bbook}
\bauthor{\bsnm{{Liptser}}, \binits{R.S.}},
\bauthor{\bsnm{{Shiryayev}}, \binits{A.N.}}:
\bbtitle{Statistics of Random Processes. II. Applications.}
\bpublisher{Springer-Verlag, New York, Heidelberg, Berlin}
(\byear{1978})
\bid{mr={0488267}}
\end{bbook}
%
%
\OrigBibText
%
\begin{bbook}
\bauthor{\bsnm{{Liptser}}, \binits{R.S.}},
\bauthor{\bsnm{{Shiryayev}}, \binits{A.N.}}:
\bbtitle{Statistics of Random Processes. II. Applications.}
\bpublisher{New York--Heidelberg--Berlin: Springer-Verlag}
(\byear{1978})
\end{bbook}
%
\endOrigBibText
\bptok{structpyb}%
\endbibitem

%b9 ###
\bibitem{Mishura:2014}
%
\begin{barticle}
\bauthor{\bsnm{{Mishura}}, \binits{Y.}}:
\batitle{Standard maximum likelihood drift parameter estimator in the
homogeneous diffusion model is always strongly consistent}.
\bjtitle{{Stat. Probab. Lett.}}
\bvolume{86},
\bfpage{24}--\blpage{29}
(\byear{2014})
\bid{doi={10.1016/j.spl.2013.12.004}, mr={3162713}}
\end{barticle}
%
%
\OrigBibText
%
\begin{barticle}
\bauthor{\bsnm{{Mishura}}, \binits{Y.}}:
\batitle{Standard maximum likelihood drift parameter estimator in the
homogeneous diffusion model is always strongly consistent}.
\bjtitle{{Stat. Probab. Lett.}}
\bvolume{86},
\bfpage{24}--\blpage{29}
(\byear{2014})
\end{barticle}
%
\endOrigBibText
\bptok{structpyb}%
\endbibitem

%b10 ###
\bibitem{PrakasaRao:1987}
%
\begin{bbook}
\bauthor{\bsnm{{Prakasa Rao}}, \binits{B.L.S.}}:
\bbtitle{Asymptotic Theory of Statistical Inference}.
\bpublisher{John Wiley \& Sons, New York}
(\byear{1987})
\bid{mr={0874342}}
\end{bbook}
%
%
\OrigBibText
%
\begin{bbook}
\bauthor{\bsnm{{Prakasa Rao}}, \binits{B.L.S.}}:
\bbtitle{Asymptotic Theory of Statistical Inference}.
\bpublisher{New York etc.: John Wiley \& Sons}
(\byear{1987})
\end{bbook}
%
\endOrigBibText
\bptok{structpyb}%
\endbibitem

%b11 ###
\bibitem{PrakasaRao88}
%
\begin{barticle}
\bauthor{\bsnm{{Prakasa Rao}}, \binits{B.L.S.}}:
\batitle{{Statistical inference from sampled data for stochastic processes.}}
\bjtitle{{Contemp. Math.}}
\bvolume{20},
\bfpage{249}--\blpage{284}
(\byear{1988})
\bid{doi={10.1090/conm/080/999016}, mr={0999016}}
\end{barticle}
%
%
\OrigBibText
%
\begin{barticle}
\bauthor{\bsnm{{Prakasa Rao}}, \binits{B.L.S.}}:
\batitle{{Statistical inference from sampled data for stochastic processes.}}
\bjtitle{{Contemp. Math.}}
\bvolume{20},
\bfpage{249}--\blpage{284}
(\byear{1988})
\end{barticle}
%
\endOrigBibText
\bptok{structpyb}%
\endbibitem

%b12 ###
\bibitem{Sorensen:2009}
%
\begin{bchapter}
\bauthor{\bsnm{{S\o rensen}}, \binits{M.}}:
\bctitle{Parametric inference for discretely sampled stochastic differential
equations}.
In: \bbtitle{Handbook of Financial Time Series. With a Foreword by Robert
Engle},
pp.~\bfpage{531}--\blpage{553}.
\bpublisher{Springer, Berlin}
(\byear{2009})
\end{bchapter}
%
%
\OrigBibText
%
\begin{bchapter}
\bauthor{\bsnm{{S\o rensen}}, \binits{M.}}:
\bctitle{Parametric inference for discretely sampled stochastic differential
equations}.
In: \bbtitle{Handbook of Financial Time Series. With a Foreword by Robert
Engle},
pp.~\bfpage{531}--\blpage{553}.
\bpublisher{Berlin: Springer}
(\byear{2009})
\end{bchapter}
%
\endOrigBibText
\bptok{structpyb}%
\endbibitem

%b13 ###
\bibitem{HSorensen:2002}
%
\begin{barticle}
\bauthor{\bsnm{{S{\o}rensen}}, \binits{H.}}:
\batitle{Estimation of diffusion parameters for discretely observed diffusion
processes}.
\bjtitle{{Bernoulli}}
\bvolume{8}(\bissue{4}),
\bfpage{491}--\blpage{508}
(\byear{2002})
\bid{mr={1914700}}
\end{barticle}
%
%
\OrigBibText
%
\begin{barticle}
\bauthor{\bsnm{{S{\o}rensen}}, \binits{H.}}:
\batitle{Estimation of diffusion parameters for discretely observed diffusion
processes}.
\bjtitle{{Bernoulli}}
\bvolume{8}(\bissue{4}),
\bfpage{491}--\blpage{508}
(\byear{2002})
\end{barticle}
%
\endOrigBibText
\bptok{structpyb}%
\endbibitem

%b14 ###
\bibitem{Yoshida92}
%
\begin{barticle}
\bauthor{\bsnm{{Yoshida}}, \binits{N.}}:
\batitle{{Estimation for diffusion processes from discrete observation.}}
\bjtitle{{J. Multivar. Anal.}}
\bvolume{41}(\bissue{2}),
\bfpage{220}--\blpage{242}
(\byear{1992})
\bid{doi={10.1016/0047-259X(92)90068-Q}, mr={1172898}}
\end{barticle}
%
%
\OrigBibText
%
\begin{barticle}
\bauthor{\bsnm{{Yoshida}}, \binits{N.}}:
\batitle{{Estimation for diffusion processes from discrete observation.}}
\bjtitle{{J. Multivariate Anal.}}
\bvolume{41}(\bissue{2}),
\bfpage{220}--\blpage{242}
(\byear{1992})
\end{barticle}
%
\endOrigBibText
\bptok{structpyb}%
\endbibitem

\end{thebibliography}
\end{document}